\newtheorem{Thm}{Theorem}[section]
\newtheorem{Lem}{Lemma}[section]
\newtheorem{Pro}{Proposition}[section]
\newcommand{\R}{\mathbb{R}}
\numberwithin{equation}{section} \numberwithin{figure}{section}
\newenvironment{proof}{\medskip\par\noindent{\bf Proof\/}.\quad}{\qquad
\raisebox{-0.5mm}{\rule{2.5mm}{2.5mm}}\vspace{7pt}}
\begin{document}
\title{On nonlinear elliptic  problems with Hardy-Littlewood-Sobolev critical exponent and Sobolev-Hardy critical exponent}
\author{\quad Guangze Gu$^{1,2}$,\quad Aleks Jevnikar$^{1}$ \\
\footnotesize{\em
1.  Department of Mathematics, Computer Science and Physics, University of Udine, Via delle }\\
\footnotesize{\em
 Scienze 206, 33100 Udine, Italy.}\\
\footnotesize{\em
2. Department  of  Mathematics, Yunnan  Normal  University, Kunming 650500, China. }\\
\footnotesize{Email: guangzegu@163.com, aleks.jevnikar@uniud.it}
}

\date{} \maketitle
\begin{abstract}
In this paper we use variational methods to establish the existence of solutions for a class of nonlinear elliptic problems involving a combined convolution-type and Hardy nonlinearity with subcritical and critical growth.

\end{abstract}

\vspace{6mm} \noindent{\bf Keywords:} Choquad equation; Critical exponent; variational method.

\vspace{6mm} \noindent
{\bf 2010 Mathematics Subject Classification.} 35A15, 35J20, 35R11, 47G20.

\section{Introduction}
Let $\Omega \subset \mathbb{R}^N$ be a smooth open bounded  set containing  $0$ in its interior. In this paper we focus on studying the existence of solutions to the following nonlocal problem
\begin{equation}\label{eq1.1}
\left\{\begin{array}{ll}
-\Delta u(x)=\lambda
\Big( \displaystyle{\int_{\Omega}} \frac{|u(y)|^{p} }{|x-y|^{\alpha}}dy\Big) |u(x)|^{p-2}u(x)+\mu\frac{ |u|^{q-2}u}{|x|^s},     \,  &x\in  \Omega,  \\
u(x) =0,    \, &  x\in \partial \Omega,
\end{array} \right.
\end{equation}
where $0<\alpha<N$, $\lambda, \mu>0$, $0\leq s\leq2$, $1< p\leq2_{\alpha}^*=\frac{2N-\alpha}{N-2}$, $2\leq q\leq 2^*(s)=\frac{2(N-s)}{N-2}$. Here $2_{\alpha}^*$ is usually called the upper critical exponent in the sense of the Hardy-Littlewood-Sobolev inequality, as is clearly shown by the following estimate:
\begin{equation*}\label{eq2.2}
\int_{\R^N}\int_{\R^N} \frac{|u(x)|^{2_{\alpha}^*}|u(y)|^{2_{\alpha}^*} }{|x-y|^{\alpha}}dxdy \leq C(N,\alpha) ||u||_{2^*}^{ 2\cdot2_{\alpha}^* }<+\infty,
\end{equation*}
for all $u \in D^{1,2}(\mathbb{R}^N)$,  where $C(N,\alpha)= \pi^{\frac{\alpha}{2}} \frac{\Gamma(\frac{N-\alpha}{2})} {\Gamma(\frac{2N-\alpha}{2})} \bigg(\frac{\Gamma(\frac{N}{2})} {\Gamma(N)} \bigg)^{\frac{\alpha}{N}-1}$. The Hardy-Sobolev critical exponent $2^*(s)$ arises from the following Hardy-Sobolev inequality (see, e.g. \cite{Ghoussoub-Yuan2000TAMS})
\[ \Big( \int_{\mathbb{R}^N} \frac{|u|^{2^*(s)}}{|x|^s}dx\Big)^{\frac{2}{2^*(s)}} \leq C(N,s) \int_{\mathbb{R}^N}|\nabla u|^2dx,~~\text{for all}~ u \in D^{1,2}(\mathbb{R}^N), \]
which is essentially due to Caffarelli, Kohn and Nirenberg \cite{Caffarelli-Kohn-Nirenberg1984CM}. Note that $2^*(0)$ is nothing but the Sobolev critical exponent.

The investigation of \eqref{eq1.1} stems from the following nonlocal Choquard equation:
\begin{equation}\label{eq1.2}
-\Delta u+u=\Big( \int_{\mathbb{R}^N}\frac{|u(y)|^p}{ |x-y|^\alpha}dy \Big)|u(x)|^{p-1},~~x\in\mathbb{R}^N,
\end{equation}
where $0<\alpha<N$.
Equation \eqref{eq1.2} is closely related to the Choquard equation, which arises in the study  of Bose-Einstein condensation and is used to model the finite range many-body interactions between particles.
For $N=3$ and $\alpha=1$, equation \eqref{eq1.2}  becomes  the Choquard-Pekar equation which was introduced by Pekkar  \cite{Pekar1954} in 1954 to describe the quantum theory of a stationary polaron at rest. Later in 1976, Choquard \cite{Lieb1967SAM} employed it to characterize an electron trapped in its own hole as an approximation to the Hartree-Fock theory for a one component plasma.
For more physical background on equation  \eqref{eq1.2}, see \cite{Bahrami2014, Choquard-Stubbe-Vuffray2008DIE, Moroz-Van-Schaftingen2013JFA}  and the references therein. Mathematically,  the appearance of the  nonlocal term of $\Big(|x|^{-\alpha} *|u|^{p}\Big)|u|^{p-2}u$ introduces significant challenges in the analysis of the Choquard equation, which has attracted increasing attention in recent literature.  For $N=3$ and $\alpha=1$, Lieb \cite{Lieb1967SAM} shown the existence and uniqueness (modulo translations) of a minimizer to  \eqref{eq1.2} by using
symmetric decreasing rearrangement inequalities.  Subsequently, Lions in \cite{Lions1980NA} generalized the work of Lieb.
Ma and Zhao \cite{Ma-Zhao2010ARMA} proved that, for $p \geq 2$, all positive solutions must be radially symmetric and monotone decreasing about some fixed point by the method of moving planes.  Cingolani, Clapp and Secchi \cite{Cingolani-Clapp-Secchi2012ZAMP}
obtained some existence and multiplicity of nontrivial intertwining solution of  \eqref{eq1.2}  in the electromagnetic
case.  Moroz and Van
Schaftingen \cite{Moroz-VanSchaftingen2013JFA} verified the regularity, positivity, radial symmetry and decay asymptotics at infinity of the ground states of \eqref{eq1.2}. For further details and important advances on this subject, we refer the reader to \cite{Chen-Radulescu-Shu-Wei2025MA, Guo-Hu-Peng-Shuai2019CVPDE,Xia-Zhang2024SIAM} and the discussion in the sequel concerning the results of \eqref{eq1.2}.

When $s=0$ and $q=2$, equation \eqref{eq1.1} turns into the following  Hartree type Br\'{e}zis-Nirenberg problem
\begin{equation}\label{eq1.3}
\left\{\begin{array}{ll}
-\Delta u =
\Big( \displaystyle{\int_{\Omega}} \frac{|u(y)|^{2_{\alpha}^*} }{|x-y|^{\alpha}}dy\Big) |u|^{{2_{\alpha}^*}-1}+\lambda u,     \,  &x\in  \Omega, \\
u =0,    \, &  x\in \partial \Omega,
\end{array} \right.
\end{equation}
Gao and Yang in  \cite{Gao-Yang2018SCM} obtained some existence results in the spirit of the well-known paper by Brezis and Nirenberg \cite{Brezis-Nirenberg1983CPAM}.
Later, they \cite{Gao-Yang2017JMAA} also established some existence and multiplicity results for \eqref{eq1.3} when replacing linear perturbation by sublinear,  superlinear and nonlocal perturbations.
For $\alpha\in (2,4)$, $N\leq 4$ and $\lambda\in (0,\lambda_1)$,
Liu and Yang  \cite{Liu-Yang2025JMAA} shown that \eqref{eq1.3} admits an odd solution with exactly two nodal domain by the concentration compactness principle.
For $\alpha\in (0,4)$ and $N\leq 5$,
Yang, Ye and Zhao \cite{Yang-Ye-Zhao2023JDE} studied the existence and asymptotic behavior of the solutions of \eqref{eq1.3} by using the Lyapunov-Schmidt reduction procedure.
For $\alpha\in (0,4)$ and $N\leq 4$,
Yang and Zhao  \cite{Yang-Zhao2023JGA}  proved that the solution  of \eqref{eq1.3} blows up exactly at a critical point
of the Robin function that cannot be on the boundary  via the Lyapunov-Schmidt reduction method.
Later, Squassina, Yang and Zhao  \cite{Squassina-Yang-Zhao2023} studied the location of the blow-up points for single bubbling solutions of \eqref{eq1.3} provided that $\lambda>0$ is small enough by using the local Pohozaev identity and the blow-up analysis.
For $\alpha\in (0,4)$ and $N\leq 5$,
Pan, Wen and Yang \cite{Pan-Wen-Yang2025JDE} considered the qualitative analysis
the blow-up solutions of  \eqref{eq1.3}.
For more results about \eqref{eq1.3}, we refer to \cite{Chen-Wang2024CVPDE,He2022JMAA} and references therein.

An additional motivation for this work originates in recent studies on the positive solutions of the following equation
\begin{equation}\label{eq1.4}
\left\{\begin{array}{ll}
-\Delta u(x)=\lambda |u|^{p-1}+\frac{ |u|^{2^*(s)-1}}{|x|^s}, ~~ x\in  \Omega,
\\
u(x) =0,    \,   x\in \partial \Omega,
\end{array} \right.
\end{equation}
where $1<p<\frac{2N}{N-2}$ and $\Omega$ is a bounded domain in $\mathbb{R}^N$ with Lipschitz boundary. Equation \eqref{eq1.4} can be seen as the limiting equation of \eqref{eq1.1} as $\alpha\to N$. This is because the
nonlocal  term $\big(|x|^{-\alpha} * |u|^{p}\big) |u|^{p-2}u$ formally degenerates to the local term $|u|^{p-1}u$ as $\alpha\to N$.  For $0\in \partial\Omega$, $\lambda<0$ and $2^*(s)-1<p<2^*-1 $, the  problem of existence of positive solutions for \eqref{eq1.4} is  related to the Li-Lin's open problem \cite{Li-Lin2012ARMA}.
For $0\in \partial\Omega$, $N\geq4$ and  $\lambda>0$,  Ghoussoub and Kang \cite{Ghoussoub-Kang2004AIHP} obtained the existence of a positive solution of \eqref{eq1.4} provided that either $\frac{N}{N-2}<p< 2^*$ or $1<p< \frac{N}{N-2}$
and the mean curvature of $\partial\Omega$ at $0$ is negative.
When $0\in \partial\Omega$, $N\geq3$,  $\lambda<0$, $2<p<\frac{N}{N-2} $ and the mean curvature of $\partial\Omega$ at $0$ is negative, Hsia, Lin and Wadade \cite{Hsia-Lin-Wadade2010JFA} shown the
the existence of  a positive solution of  \eqref{eq1.4}. Moreover, they also considered the case of $\lambda>0$ and $p=2^*-1 $. For $0\in \Omega$, $N=3$, $\lambda>0$ and $2<p<2^*-1 $,
Cerami, Zhong, and Zou \cite{Cerami-Zhong-Zou2015CVPDE} verified that \eqref{eq1.4} possesses a positive solution if $p>3 $ or $p=3$ with $\lambda$ large enough.  For more results concerning \eqref{eq1.4}, we refer to the papers \cite{Hsia-Lin-Wadade2010JFA, Li-Lin2012ARMA,Zhong-Zou2016CCM}
and reference therein.

Motivated by the aforementioned work, in this paper we continue the study of the case $0\in \Omega$ and get the following existence results for \eqref{eq1.1}:

\begin{Thm}\label{Thm1.1}
Let \(\Omega\subset \mathbb{R}^N\) be a $C^1$ open bounded domain with $0\in \Omega$. Assume  $0\leq s\leq2$ and  $0<\alpha<N$, then  problem \eqref{eq1.1} has at least one nontrivial solution provided one of the following conditions hold:
\begin{itemize}
\item[$(1)$]  $\lambda, \mu>0$, $1<p<2_\alpha^*$ and  $2< q < 2^*(s)$,
\item[$(2)$]  $\lambda>0$, $0<\mu<\bar{\mu}=\frac{(N-2)^2}{4}$,  $1<p<2_\alpha^*$ and $2= q =2^*(s)$( that is, $s=2$),
\end{itemize}
\begin{itemize}
\item[$(3)$]  $\mu>0$,  $2< q =2^*(s)$ and one of the following conditions is satisfied:
    \begin{itemize}
    \item[(\romannumeral1)] $2_\alpha^*-1<p<2_\alpha^*$ and $\lambda>0$,
    \item[(\romannumeral2)] $1<p\leq2_\alpha^*-1$ with $\alpha\leq4$ and $\lambda>0$ is sufficiently large,
    \end{itemize}
\item[$(4)$]  $\lambda>0$, $p=2_\alpha^*$  and one of the following conditions is satisfied:
    \begin{itemize}
     \item[(\romannumeral1)] $N=3$, $s<1$, $2^*(s)-2<q< 2^*(s)$ and $\mu>0$,
     \item[(\romannumeral2)] $N=3$, $s<1$, $2< q\leq 2^*(s)-2$ and $\mu>0$ is sufficiently large,
     \item[(\romannumeral3)] $N=3$, $1\leq s<2$, $2<q< 2^*(s)$ and $\mu>0$,
     \item[(\romannumeral4)] $N\geq4$, $0<s<2$, $2<q< 2^*(s)$ and $\mu>0$.
    \end{itemize}
\end{itemize}
\end{Thm}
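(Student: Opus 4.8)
The plan is to treat problem \eqref{eq1.1} variationally, working in $H_0^1(\Omega)$ with the energy functional
\[
I_{\lambda,\mu}(u)=\frac12\int_\Omega|\nabla u|^2\,dx-\frac{\lambda}{2p}\int_\Omega\int_\Omega\frac{|u(x)|^p|u(y)|^p}{|x-y|^\alpha}\,dx\,dy-\frac{\mu}{q}\int_\Omega\frac{|u|^q}{|x|^s}\,dx,
\]
whose critical points are weak solutions. The Hardy-Littlewood-Sobolev inequality together with the displayed estimate for $C(N,\alpha)$ makes the convolution term well defined and $C^1$ precisely when $1<p\le 2_\alpha^*$, and the Hardy-Sobolev inequality does the same for the last term when $2\le q\le 2^*(s)$ (for $q=2^*(2)=2$ one uses Hardy's inequality, which is why the restriction $\mu<\bar\mu=(N-2)^2/4$ appears in case $(2)$, ensuring coercivity of the quadratic form $\int|\nabla u|^2-\mu\int|u|^2/|x|^2$). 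I would first verify the mountain pass geometry: $I_{\lambda,\mu}(0)=0$, $I_{\lambda,\mu}$ is positive on a small sphere (here the subcriticality $p<2_\alpha^*$, resp.\ $q<2^*(s)$, or the smallness of $\mu$ in case $(2)$, guarantees the higher-order terms are dominated near the origin), and $I_{\lambda,\mu}(t u_0)\to-\infty$ along some direction since $2p>2$ and $q>2$. This produces a Palais-Smale sequence at the mountain pass level $c_{\lambda,\mu}>0$, and boundedness of such sequences follows from the standard $I-\frac1{2p}\langle I',\cdot\rangle$ (or $I-\frac1{q}\langle I',\cdot\rangle$) manipulation.

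The heart of the matter is the compactness of Palais-Smale sequences, which fails above a threshold because of the two critical phenomena at play: concentration of mass in the Hartree term governed by the best constant $S_{HL}$ for $\int\int|u|^{2_\alpha^*}|u|^{2_\alpha^*}/|x-y|^\alpha$, and concentration at $0$ in the Hardy-Sobolev term governed by the best constant $\mu_s$ for $\int|\nabla u|^2/(\int|u|^{2^*(s)}/|x|^s)^{2/2^*(s)}$. In the fully subcritical cases $(1)$ and $(2)$ a Palais-Smale sequence converges strongly up to a subsequence directly from Rellich compactness (with the Hardy term in case $(2)$ handled by the strict inequality $\mu<\bar\mu$), so the mountain pass critical point is immediately a nontrivial solution. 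In the critical cases $(3)$ and $(4)$ I would instead show $c_{\lambda,\mu}$ lies strictly below the first energy level at which compactness is lost — for case $(3)$ (critical Hardy-Sobolev term, subcritical Hartree term) this level is $\frac{2-s}{2(N-s)}\mu_s^{(N-s)/(2-s)}$, and for case $(4)$ (critical Hartree term $p=2_\alpha^*$, subcritical Hardy-Sobolev term) it is the analogous quantity built from $S_{HL}$.

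The strict inequality is established, as in Brezis-Nirenberg and in Gao-Yang \cite{Gao-Yang2018SCM}, by testing the energy along the family of Aubin-Talenti type extremals: for case $(3)$ one uses the (truncated, by a cutoff supported near $0$) extremals $U_\varepsilon$ of the Hardy-Sobolev inequality, expands $I_{\lambda,\mu}(t U_\varepsilon)$, and checks that the lower-order Hartree term contributes a negative term of larger order than the error made by the cutoff and mass-normalization corrections; the exponent bookkeeping is exactly what forces the dichotomy "$2_\alpha^*-1<p<2_\alpha^*$ with any $\lambda$" versus "$p\le 2_\alpha^*-1$ with $\alpha\le4$ and $\lambda$ large," since the gain from the Hartree term scales like $\varepsilon^{\theta(p)}$ against a competing logarithmic or power error, and only a large coefficient $\lambda$ (or the favorable range of $p$) tips the balance. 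For case $(4)$, which is why $N=3$ and the restrictions on $s$ appear, one tests along the extremals of the Hartree best constant (related to the Talenti bubbles raised to $2_\alpha^*/2^*$) and now the subcritical Hardy-Sobolev term must win against the cutoff error, which succeeds only in the indicated low-dimensional windows (the delicate case being $N=3$, $s<1$, where $2^*(s)$ is large and the power threshold $2^*(s)-2$ emerges, requiring $\mu$ large when $q$ is small, exactly mirroring the Brezis-Nirenberg phenomenon in dimension three). The main obstacle throughout is therefore this quantitative energy estimate: correctly computing the asymptotic expansions of the three functionals along the two different concentrating families, identifying the leading correction terms, and verifying in each of the enumerated sub-cases that their signs and orders conspire to keep $c_{\lambda,\mu}$ below the compactness threshold; once that is done, a standard concentration-compactness / Brezis-Lieb argument upgrades the bounded Palais-Smale sequence to a strongly convergent one with nontrivial limit.
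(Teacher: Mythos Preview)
Your proposal is correct and follows essentially the same approach as the paper: mountain pass geometry, boundedness of Palais--Smale sequences, direct compactness via Rellich in the fully subcritical cases $(1)$ and $(2)$, and in the critical cases $(3)$--$(4)$ the Br\'ezis--Nirenberg strategy of showing the mountain pass level falls strictly below the first non-compactness threshold by testing along truncated extremals (Hardy--Sobolev bubbles for $(3)$, Aubin--Talenti bubbles for $(4)$), together with a Br\'ezis--Lieb decomposition to recover strong convergence. The only minor discrepancies are that the paper's thresholds carry explicit $\mu^{-2/(2^*(s)-2)}$ and $(\lambda C(N,\alpha))^{-1/(2_\alpha^*-1)}$ factors you omitted, and case $(4)$ also covers $N\ge 4$, not only $N=3$; but these do not affect the structure of the argument.
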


We conclude the introduction by mentioning that for the double critical case $p=2_{\alpha}^*$ and $2= q =2^*(s)$( that is, $s=2$),
\begin{equation}\label{eq1.5}
\left\{\begin{array}{ll}
-\Delta u(x)-\mu\frac{ u}{|x|^2}=
\Big( \displaystyle{\int_{\Omega}} \frac{|u(y)|^{2_{\alpha}^*} }{|x-y|^{\alpha}}dy\Big) |u(x)|^{2_{\alpha}^*-2}u(x),     \,  &x\in  \Omega,  \\
u(x) =0,    \, &  x\in \partial \Omega,
\end{array} \right.
\end{equation}
the least-energy solutions of are extremal functions for
$$S_{H,\alpha}(\Omega):=\inf_{u\in D_0^{1,2}(\Omega)\backslash \{0\} } \frac{\int_{\Omega}|\nabla u|^2dx -\mu\int_{\Omega}\frac{u^2}{|x|^2} dx
}{
\Big(\int_{\Omega}\int_{\Omega} \frac{|u|^{2_{\alpha}^*}(x)|u(y)|^{2_{\alpha}^*} }{|x-y|^{\alpha}}dxdy \Big)^{1/2_{\alpha}^*}}.$$
Guo and Tang \cite{Guo-Tang2025arXiv} shown the existence, symmetry and  asymptotic behavior of the extremal function of $S_{H,\alpha}(\mathbb{R}^N)$ via a suitable version of concentration-compactness property and the moving plane method.

In a forthcoming paper, we shall tackle the case $0\in\partial\Omega$, which is more in the spirit of Li-Lin's open problem. In this case, the study of nontrivial solutions was initiated by Ghoussoub and Kang \cite{Ghoussoub-Kang2004AIHP} and developed by Ghoussoub and Robert \cite{Ghoussoub-Robert2006GFA}.
Although we are concerned with the case $0\in\Omega$ in this paper, we face here new additional difficulties due to the combined effect of the nonlocal term $\Big(|x|^{-\alpha} *|u|^{p}\Big)|u|^{p-2}u$  and the Hardy-Sobolev critical term.
In order to obtain the existence of nontrivial solution, a deep analysis is needed to establish a compactness property and run the variational method.

\medskip

The paper is organized as follows. In section \ref{sec:prelim} we collect some preliminary results and then in section \ref{sec:proof} we give the proof of the main result.

\par
\section{Preliminary results} \label{sec:prelim}
Before proving our results we introduce some notations and collect some basic results. Let us consider the space $H^1_0(\Omega)$ which is the completion of $C_0^{\infty}(\Omega)$ with the norm
$$||u||:=\left(\int_{\Omega} |\nabla u |^2 dx \right)^{\frac{1}{2}}.$$
As a Hilbert space, it is endowed with inner product $\langle u,v \rangle := \left(\int_{\Omega} \nabla u \cdot \nabla v dx \right)^{\frac{1}{2}}$.
It is well-known that $H^1_0(\Omega)\hookrightarrow L^{p}(\Omega, |x|^{-s}dx)$ continuously for $p\in [1, 2^*(s)]$, compactly for $p\in [1, 2^*(s))$ with $0\leq s<2$.

For the sake of  convenience we recall the well-known Hardy-Littlewood-Sobolev inequality \cite{Lieb-Loss2001book}:
\begin{Pro}\label{Pro2.3}
(Hardy-Littlewood-Sobolev inequality) Let $t,r>1$, and $0<\alpha< N$ with $1/t+ 1/r+ \alpha/N=2 ,~f\in L^t(\R^N)$  and $g\in L^r(\R^N)$. There exists a sharp constant  $C(t,r,N,\mu)>0$, independent of $f$ and $g$, such that
\begin{equation}\label{eq2.1}
\int_{\mathbb{R}^N}\int_{\R^N}\frac{f(x)g(y)}{|x-y|^{\alpha}}dxdy \leq C(t,r,N,\alpha)||f||_t||g||_r.
\end{equation}
\end{Pro}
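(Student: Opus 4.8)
This is the classical Hardy--Littlewood--Sobolev inequality, proved in \cite{Lieb-Loss2001book}; the plan below indicates the route I would follow if a self-contained proof were wanted. Set $\beta:=N-\alpha\in(0,N)$ and introduce the operator $Tf(x):=\int_{\R^N}|x-y|^{-\alpha}f(y)\,dy$, i.e.\ the Riesz potential of order $\beta$, so that the left-hand side of \eqref{eq2.1} is exactly $\int_{\R^N} g\,(Tf)$. The constraint $\tfrac1t+\tfrac1r+\tfrac{\alpha}{N}=2$ is equivalent to $\tfrac{1}{r'}=\tfrac1t-\tfrac{\beta}{N}$ with $1<t<r'<\infty$, so by H\"older's inequality the claim reduces to the boundedness $T\colon L^t(\R^N)\to L^{r'}(\R^N)$.

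To prove this mapping property I would split the kernel at a radius $\delta>0$ to be chosen pointwise, writing $Tf=A_\delta f+B_\delta f$ where $A_\delta f(x)$ is the contribution of $\{|y|<\delta\}$ and $B_\delta f(x)$ that of $\{|y|\ge\delta\}$. Decomposing $\{|y|<\delta\}$ into dyadic annuli yields the pointwise bound $|A_\delta f(x)|\le C\,\delta^{\beta}\,\mathcal Mf(x)$ in terms of the Hardy--Littlewood maximal function $\mathcal Mf$, while H\"older's inequality on the far piece gives $|B_\delta f(x)|\le C\,\delta^{\beta-N/t}\|f\|_{t}$, the relevant integral converging exactly because $r'<\infty$. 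Choosing $\delta=\bigl(\|f\|_t/\mathcal Mf(x)\bigr)^{t/N}$ to balance the two contributions gives the pointwise estimate $|Tf(x)|\le C\,\bigl(\mathcal Mf(x)\bigr)^{t/r'}\|f\|_t^{\,1-t/r'}$; raising to the power $r'$, integrating over $\R^N$, and invoking the Hardy--Littlewood maximal theorem ($\mathcal M\colon L^t\to L^t$ for $t>1$) then yields $\|Tf\|_{r'}\le C\|f\|_t$, hence \eqref{eq2.1} with some finite constant $C(t,r,N,\alpha)$.

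The one genuinely delicate point --- and the reason \cite{Lieb-Loss2001book} proceeds differently --- is the assertion that this constant is \emph{sharp}: the maximal-function argument above is lossy. To capture the optimal constant and the extremals (which in the conformal case $t=r$ are the functions $(\varepsilon^2+|x-x_0|^2)^{-(2N-\alpha)/2}$) one first reduces, via Riesz's rearrangement inequality, to radially symmetric decreasing $f$ and $g$ --- the $L^t$, $L^r$ norms and the bilinear integral all being nondecreasing under symmetrization --- then rewrites everything through the layer-cake formula together with an elementary volume estimate for $\iint\chi_{B_{r_1}}(x)\,\chi_{B_\rho}(x-y)\,\chi_{B_{r_2}}(y)\,dx\,dy$, and finally runs the conformal-inversion / competing-symmetries argument of Lieb. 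Since only \eqref{eq2.1} with \emph{some} finite constant is used in what follows, at this stage I would simply cite \cite{Lieb-Loss2001book} for the optimal value.
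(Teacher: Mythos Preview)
Your proposal is mathematically sound: the Hedberg-type pointwise bound via the maximal function is a standard and correct route to the $L^t\to L^{r'}$ boundedness of the Riesz potential, and you are right that this argument does not recover the sharp constant, for which one needs rearrangement and Lieb's competing-symmetries machinery.

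That said, the paper does not prove Proposition~\ref{Pro2.3} at all. It is stated purely as a recalled result, introduced with ``we recall the well-known Hardy--Littlewood--Sobolev inequality \cite{Lieb-Loss2001book}'' and used as a black box thereafter. So your sketch already goes well beyond what the paper provides; your opening sentence (citing \cite{Lieb-Loss2001book} and moving on) is in fact exactly the paper's approach, and the remainder of your write-up is extra content the paper omits.
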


\medskip

Moreover, if $t=r=\frac{2N}{2N-\alpha}$, then
$$C(t,r,N,\alpha)=C(N,\alpha)= \pi^{\frac{\alpha}{2}} \frac{\Gamma(\frac{N-\alpha}{2})} {\Gamma(\frac{2N-\alpha}{2})} \bigg(\frac{\Gamma(\frac{N}{2})} {\Gamma(N)} \bigg)^{\frac{\alpha}{N}-1}.$$
In this case, the equality in \eqref{eq2.1} holds if and only if $f\equiv C g$ and
$$g(x)=A\Big(a+|x-x_0|^2\Big)^{\frac{\alpha-2N}{2}}$$
for some $A\in \mathbb{C}, x_0\in \mathbb{R}^N$ and $0\neq a \in \mathbb{R}$.

Let us also recall that if $f(x)=h(x)=u^p(x)$ in  \eqref{eq2.1},  the following integral
$$\int_{\R^N}\int_{\R^N}\frac{|u(x)|^p|u(y)|^p
}{|x-y|^{\alpha}}dxdy $$
is well-defined provided
$$\frac{2N-\alpha}{N}\leq p \leq \frac{2N-\alpha}{N-2}. $$ $2_{*\alpha}=\frac{2N-\alpha}{N}$ is called the lower critical  exponent and $2_{\alpha}^*=\frac{2N-\alpha}{N-2}$  the upper critical exponent in the sense of the Hardy-Littlewood-Sobolev inequality.

\par
\section{Proof of the main result} \label{sec:proof}
It is well known that the nontrivial solutions of \eqref{eq1.1} are equivalent to nonzero critical points of the following energy functional
\begin{equation*}
I(u) := \frac{1}{2} \int_{\Omega} |\nabla u|^2dx -\frac{\lambda}{2p} \int_{\Omega}\int_{\Omega} \frac{|u(x)|^{p}|u(y)|^{p} }{|x-y|^{\alpha}}dxdy-  \frac{\mu}{q} \int_{\Omega} \frac{|u|^{q}}{|x|^s}dx .
\end{equation*}

\vspace{3mm}
It is easy to check that the functional $\mathcal{I}$ has a the Mountain-Pass geometry.
\begin{Lem}\label{Lem2.1}
It holds:
\begin{itemize}
\item[$(1)$]  There exist $\beta, \rho>0$ such that $\mathcal{I}(u)\geq \beta$ whenever $||u||=\rho$.
\item[$(2)$]  There is an $e\in H_0^1(\Omega)$ with $||e||\geq \rho$ such that $\mathcal{I}(e)<0$.
\end{itemize}
\end{Lem}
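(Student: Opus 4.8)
The plan is to verify the two geometric conditions of the Mountain Pass theorem separately: condition (1) by showing that near the origin the quadratic part of $I$ dominates the nonlinear terms, which are of strictly higher homogeneity, and condition (2) by exhibiting a ray along which $I$ diverges to $-\infty$. Throughout, $I$ denotes the functional displayed above (written $\mathcal I$ in the statement).

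For part (1) I would first control the two nonlinear terms by Sobolev-type embeddings. Since $\Omega$ is bounded and $1<p\le 2_\alpha^*$ one has $\tfrac{2Np}{2N-\alpha}\le 2^*$, so applying the Hardy--Littlewood--Sobolev inequality (Proposition~\ref{Pro2.3} with $t=r=\tfrac{2N}{2N-\alpha}$) and then the Sobolev embedding $H_0^1(\Omega)\hookrightarrow L^{2Np/(2N-\alpha)}(\Omega)$ gives
\[
\int_\Omega\!\int_\Omega \frac{|u(x)|^p|u(y)|^p}{|x-y|^\alpha}\,dx\,dy \;\le\; C(N,\alpha)\,\|u\|_{2Np/(2N-\alpha)}^{2p}\;\le\; C_1\,\|u\|^{2p},
\]
while the continuous embedding $H_0^1(\Omega)\hookrightarrow L^q(\Omega,|x|^{-s}dx)$ for $q\le 2^*(s)$ recalled in Section~\ref{sec:prelim} yields $\int_\Omega |x|^{-s}|u|^q\,dx\le C_2\|u\|^q$. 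Hence
\[
I(u)\;\ge\;\tfrac12\|u\|^2-\tfrac{\lambda C_1}{2p}\|u\|^{2p}-\tfrac{\mu C_2}{q}\|u\|^q.
\]
In cases (1), (3), (4) of Theorem~\ref{Thm1.1} we have $q>2$ and $2p>2$, so the last two terms are $o(\|u\|^2)$ as $\|u\|\to 0$ and a sufficiently small $\rho>0$ makes the right-hand side equal to some $\beta>0$ on $\|u\|=\rho$. In case (2), where $q=2=2^*(s)$ (so $s=2$), one instead absorbs the Hardy term into the quadratic part using Hardy's inequality $\int_\Omega|x|^{-2}u^2\,dx\le\bar\mu^{-1}\|u\|^2$: since $0<\mu<\bar\mu$,
\[
I(u)\;\ge\;\tfrac12\Bigl(1-\tfrac{\mu}{\bar\mu}\Bigr)\|u\|^2-\tfrac{\lambda C_1}{2p}\|u\|^{2p},
\]
and $2p>2$ again gives $I(u)\ge\beta>0$ on a small sphere.

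For part (2), fix any $u_0\in H_0^1(\Omega)\setminus\{0\}$ (say a smooth nonnegative function supported away from $\partial\Omega$) and study $g(t):=I(tu_0)$ for $t>0$. With $A:=\int_\Omega\!\int_\Omega|x-y|^{-\alpha}|u_0(x)|^p|u_0(y)|^p\,dx\,dy>0$ and $B:=\int_\Omega|x|^{-s}|u_0|^q\,dx\ge 0$ one gets $g(t)=\tfrac{t^2}{2}\|u_0\|^2-\tfrac{\lambda}{2p}A\,t^{2p}-\tfrac{\mu}{q}B\,t^q$ (in case (2) the leading coefficient becomes $\tfrac12(\|u_0\|^2-\mu\int_\Omega|x|^{-2}u_0^2\,dx)$, still finite). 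Since $2p>2$ and $\lambda A>0$, the term $-t^{2p}$ dominates and $g(t)\to-\infty$ as $t\to+\infty$; choosing $t$ large enough that $\|tu_0\|\ge\rho$ and $g(t)<0$ and setting $e:=tu_0$ completes the argument.

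Neither step presents a genuine obstacle: both are routine verifications of the Mountain Pass geometry. The only point requiring care is the Hardy-critical case $q=2=2^*(s)$ of part (1), where the perturbation has the same homogeneity as the leading quadratic term; there one must use the sharp Hardy inequality together with the strict condition $\mu<\bar\mu=\tfrac{(N-2)^2}{4}$ to keep the quadratic form $\|u\|^2-\mu\int_\Omega|x|^{-2}u^2\,dx$ positive definite, after which the higher-order nonlocal term is absorbed as in the generic case.
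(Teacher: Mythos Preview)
Your proof is correct and follows essentially the same approach as the paper: bounding the nonlocal term via the Hardy--Littlewood--Sobolev inequality plus Sobolev embedding, bounding the Hardy--Sobolev term via the embedding $H_0^1(\Omega)\hookrightarrow L^q(\Omega,|x|^{-s}dx)$, and treating the critical case $q=2$, $s=2$ separately with Hardy's inequality and the hypothesis $\mu<\bar\mu$. Part~(2) is likewise the same scaling argument along a ray $t\mapsto tu_0$.
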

\begin{proof}
$(1)$ If $p\in(1, 2_{\alpha}^*]$ and $q\in(2, 2^*(s)]$, then  by the Sobolev embedding and the Hardy-Littlewood-Sobolev inequality, for all
$u\in H_0^1(\Omega)\setminus\{0\}$, we have that
$$\begin{aligned}
\mathcal{I}(u)&= \frac{1}{2} \int_{\Omega} |\nabla u|^2dx -\frac{\lambda}{2p} \int_{\Omega}\int_{\Omega} \frac{|u(x)|^{p}|u(y)|^{p} }{|x-y|^{\alpha}}dxdy -  \frac{\mu}{q} \int_{\Omega} \frac{|u|^{q}}{|x|^s}dx\\
&\geq \frac{1}{2} ||u||^2 -\frac{\lambda}{2p} C ||u||^{2p}-  \frac{\mu}{q}C ||u||^{q},
\end{aligned}$$
which means that we can choose some $\alpha,\rho>0$ such that $\mathcal{I}(u)\geqslant\alpha$ for $\|u\|=\rho$.

If $0<\mu<\bar{\mu}$, $1<p<2_\alpha^*$ and $2= q =2^*(s)$( that is, $s=2$), by using the Hardy-Littlewood-Sobolev inequality again,
$$\begin{aligned}
\mathcal{I}(u)&= \frac{1}{2} \int_{\Omega} |\nabla u|^2dx  -  \frac{\mu}{2} \int_{\Omega} | u|^2dx -\frac{\lambda}{2p} \int_{\Omega}\int_{\Omega} \frac{|u(x)|^{p}|u(y)|^{p} }{|x-y|^{\alpha}}dxdy\\
&\geqslant \frac{\bar{\mu}-\mu}{\bar{2\mu}}\|u\|^2-\frac{\lambda}{2p} C ||u||^{2p},  \\
\end{aligned}$$
which gives  that  we can choose some $\alpha,\rho>0$ such that $\mathcal{I}(u)\geqslant\alpha$ for $\|u\|=\rho.$

$(2)$  For some $u_0\in H_0^1(\Omega)\setminus\{0\}$, we have
$$\begin{aligned}
\mathcal{I}(tu_0)&=\frac{t^2}{2}\int_\Omega|\nabla u_0|^2dx -\frac{t^{2p}\lambda}{2p} \int_{\Omega}\int_{\Omega} \frac{|u(x)|^{p}|u(y)|^{p} }{|x-y|^{\alpha}}dxdy -  \frac{t^q\mu}{q} \int_{\Omega} \frac{|u|^{q}}{|x|^s}dx
\end{aligned}$$
for $t > $ large enough. Then we may take an $e:= t_*u_0$ for some $t_*0 > 0 $ and  $(2)$  follows.
\end{proof}

\medskip

Concerning the Palais-Smale condition we have the following:
\begin{Lem}\label{Lem2.2}
Let $0\leq s\leq2$, $0<\alpha<N$, $p$ and $q$ satisfy $p\in(1, 2_{\alpha}^*]$, $q\in[2, 2^*(s)]$. Then
\begin{itemize}
\item[$(1)$] If $1<p<2_\alpha^*$ and  $2< q < 2^*(s)$, then  for any $\lambda, \mu>0$, $\mathcal{I}$ satisfies $(PS)_c$ for all $c\in \mathbb{R}$.
\item[$(2)$]  If $1<p<2_\alpha^*$ and $2< q =2^*(s)$, then  for any $\lambda, \mu>0$, $\mathcal{I}$ satisfies $(PS)_c$ for all
    \[c<\frac{2-s}{2(N-s)} \frac{1}{\mu^{\frac{2}{2^*(s)-2}}} \mu_{s}(\mathbb{R}^N)^{\frac{N-s}{2-s}}.\]
\item[$(3)$] If $p=2_\alpha^*$ and  $2< q <2^*(s)$  for any $\lambda, \mu>0$, $\mathcal{I}$ satisfies $(PS)_c$ for all
    \[c< \frac{N-\alpha+2}{2(2N-\alpha)}  \Big(\frac{1}{\lambda C(N,\alpha) }  \Big)^{\frac{1}{2_\alpha^*-1}} S^{\frac{2_\alpha^*}{2_\alpha^*-1}}.\]
\item[$(4)$] If $1<p<2_\alpha^*$ and $2= q =2^*(s)$( that is, $s=2$), for any $\lambda>0$, $0<\mu<\bar{\mu}$, $\mathcal{I}$ satisfies $(PS)_c$ for all $c\in \mathbb{R}$.
\end{itemize}
\end{Lem}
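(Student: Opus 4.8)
The plan is to run the classical Br\'ezis--Nirenberg compactness scheme in four steps. Throughout we abbreviate
\[
B(u):=\int_\Omega\!\int_\Omega\frac{|u(x)|^p|u(y)|^p}{|x-y|^\alpha}\,dxdy,\qquad D(u):=\int_\Omega\frac{|u|^q}{|x|^s}\,dx,
\]
so that $\mathcal I(u)=\tfrac12\|u\|^2-\tfrac{\lambda}{2p}B(u)-\tfrac{\mu}{q}D(u)$ and $\langle\mathcal I'(u),u\rangle=\|u\|^2-\lambda B(u)-\mu D(u)$, and we let $(u_n)\subset H_0^1(\Omega)$ satisfy $\mathcal I(u_n)\to c$, $\mathcal I'(u_n)\to0$.

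\textbf{Step 1 (boundedness and extraction of a weak limit).} In the cases $(1)$--$(3)$ one has $q>2$ and $2p>2$; with $\theta:=\min\{2p,q\}>2$, the identity
\[
\mathcal I(u_n)-\tfrac1\theta\langle\mathcal I'(u_n),u_n\rangle=\Big(\tfrac12-\tfrac1\theta\Big)\|u_n\|^2+\lambda\Big(\tfrac1\theta-\tfrac1{2p}\Big)B(u_n)+\mu\Big(\tfrac1\theta-\tfrac1q\Big)D(u_n)
\]
has a nonnegative right-hand side dominating $(\tfrac12-\tfrac1\theta)\|u_n\|^2$, while the left-hand side is $\le c+o(1)+o(1)\|u_n\|$, so $(u_n)$ is bounded. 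In case $(4)$, where $q=2=2^*(2)$ and $s=2$, we instead take $\theta\in(2,2p)$ and apply the Hardy inequality $\int_\Omega|x|^{-2}u^2\,dx\le\bar\mu^{-1}\|u\|^2$ to obtain $c+o(1)+o(1)\|u_n\|\ge(\tfrac12-\tfrac1\theta)\big(\|u_n\|^2-\mu D(u_n)\big)\ge(\tfrac12-\tfrac1\theta)\tfrac{\bar\mu-\mu}{\bar\mu}\|u_n\|^2$, so again $(u_n)$ is bounded since $\mu<\bar\mu$. Passing to a subsequence, $u_n\rightharpoonup u$ in $H_0^1(\Omega)$, $u_n\to u$ a.e.\ in $\Omega$, $u_n\to u$ in $L^r(\Omega)$ for $1\le r<2^*$, and, by the compact embedding $H_0^1(\Omega)\hookrightarrow L^t(\Omega,|x|^{-s}dx)$ valid for $t<2^*(s)$ with $0\le s<2$, also $u_n\to u$ in the relevant subcritical weighted space.

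\textbf{Step 2 (the weak limit solves \eqref{eq1.1} with nonnegative energy).} We pass to the limit in $\langle\mathcal I'(u_n),\varphi\rangle\to0$ for $\varphi\in C_0^\infty(\Omega)$: the Dirichlet term converges by weak convergence; the Hardy--Sobolev term because $|u_n|^{q-2}u_n\to|u|^{q-2}u$ in the appropriate dual weighted Lebesgue space (strongly when $q<2^*(s)$, and using $u_n/|x|\rightharpoonup u/|x|$ in $L^2$ when $s=2$); and the Choquard term because it is a bilinear pairing in which $|x|^{-\alpha}*|u_n|^p\rightharpoonup|x|^{-\alpha}*|u|^p$ in $L^{2N/\alpha}$ (the Riesz potential being a bounded linear, hence weakly continuous, operator $L^{2N/(2N-\alpha)}\to L^{2N/\alpha}$) while $|u_n|^{p-2}u_n\varphi\to|u|^{p-2}u\varphi$ strongly in $L^{2N/(2N-\alpha)}$, every exponent being admissible because $1<p\le 2_\alpha^*$. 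Hence $\mathcal I'(u)=0$, and testing with $u$ and using $p>1$, $q\ge2$ yields $\mathcal I(u)=\lambda\big(\tfrac12-\tfrac1{2p}\big)B(u)+\mu\big(\tfrac12-\tfrac1q\big)D(u)\ge0$.

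\textbf{Step 3 (Brezis--Lieb splitting).} Put $v_n:=u_n-u\rightharpoonup0$. By the Brezis--Lieb lemma, $\|u_n\|^2=\|u\|^2+\|v_n\|^2+o(1)$ and $D(u_n)=D(u)+D(v_n)+o(1)$ (the latter in the measure $|x|^{-s}dx$); by its nonlocal counterpart for the Choquard term (Gao--Yang, Moroz--Van Schaftingen), $B(u_n)=B(u)+B(v_n)+o(1)$. Moreover $D(v_n)\to0$ whenever $q<2^*(s)$ with $s<2$, and $B(v_n)\to0$ whenever $p<2_\alpha^*$. Substituting these into $\langle\mathcal I'(u_n),u_n\rangle=o(1)$ and using $\langle\mathcal I'(u),u\rangle=0$ from Step~2, all terms cancel except
\[
\|v_n\|^2=\lambda B(v_n)+\mu D(v_n)+o(1).
\]
Consequently $\|v_n\|^2=o(1)$ in case $(1)$; $\|v_n\|^2=\mu D(v_n)+o(1)$ with $D(v_n)=\int_\Omega|x|^{-2}v_n^2\,dx$ in case $(4)$ and with $D(v_n)=\int_\Omega|x|^{-s}|v_n|^{2^*(s)}\,dx$ in case $(2)$; and $\|v_n\|^2=\lambda B(v_n)+o(1)$ in case $(3)$. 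The analogous splitting $\mathcal I(u_n)=\mathcal I(u)+\tfrac12\|v_n\|^2-\tfrac\lambda{2p}B(v_n)-\tfrac\mu qD(v_n)+o(1)$ will also be used.

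\textbf{Step 4 (conclusion, and the main obstacle).} In case $(1)$, $\|v_n\|\to0$ at once. In case $(4)$, Hardy's inequality gives $D(v_n)\le\bar\mu^{-1}\|v_n\|^2$, hence $\|v_n\|^2\le\mu\bar\mu^{-1}\|v_n\|^2+o(1)$ and $\|v_n\|\to0$ since $\mu<\bar\mu$. In case $(2)$, set $\ell:=\lim\|v_n\|^2=\mu\lim D(v_n)$; if $\ell>0$, the Hardy--Sobolev inequality $D(v_n)\le\mu_s(\mathbb{R}^N)^{-2^*(s)/2}\|v_n\|^{2^*(s)}$ forces $\ell\ge\mu^{-2/(2^*(s)-2)}\mu_s(\mathbb{R}^N)^{(N-s)/(2-s)}$ (using $\tfrac{2^*(s)}{2^*(s)-2}=\tfrac{N-s}{2-s}$), while the splitting of $\mathcal I(u_n)$ gives $c=\mathcal I(u)+\big(\tfrac12-\tfrac1{2^*(s)}\big)\ell=\mathcal I(u)+\tfrac{2-s}{2(N-s)}\ell\ge\tfrac{2-s}{2(N-s)}\ell$, contradicting the assumed bound on $c$; hence $\ell=0$. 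Case $(3)$ is identical with the estimate $B(v_n)\le C(N,\alpha)S^{-2_\alpha^*}\|v_n\|^{2\cdot2_\alpha^*}$ (the inequality recalled in the introduction, combined with the definition of the best Sobolev constant $S$) in place of Hardy--Sobolev: it forces $\ell\ge(\lambda C(N,\alpha))^{-1/(2_\alpha^*-1)}S^{2_\alpha^*/(2_\alpha^*-1)}$, while $c=\mathcal I(u)+\big(\tfrac12-\tfrac1{2\cdot2_\alpha^*}\big)\ell=\mathcal I(u)+\tfrac{N-\alpha+2}{2(2N-\alpha)}\ell\ge\tfrac{N-\alpha+2}{2(2N-\alpha)}\ell$ is exactly the threshold in the statement, so again $\ell=0$. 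In every case $u_n\to u$ strongly, establishing $(PS)_c$ on the claimed range. The main obstacle is Steps~2--3: the nonlocal Brezis--Lieb lemma and the weak-continuity of the Choquard nonlinearity, which are precisely what turn the nonlocal term into a compact perturbation away from $p=2_\alpha^*$; once these are in hand the remainder is bookkeeping of exponents, tuned so that the loss-of-compactness level matches the constants appearing in $(2)$ and $(3)$.
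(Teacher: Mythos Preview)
Your proof is correct and follows essentially the same Br\'ezis--Nirenberg compactness scheme as the paper's own proof: boundedness of the Palais--Smale sequence, identification of the weak limit as a critical point with nonnegative energy, Brezis--Lieb splitting of each term, and the threshold comparison in the critical cases~(2) and~(3). The only noticeable difference is your more direct boundedness argument via $\theta=\min\{2p,q\}$ in cases~(1)--(3) (the paper instead takes $\theta=2$, bounds $B(u_n)$ and $D(u_n)$ from $\mathcal I(u_n)-\tfrac12\langle\mathcal I'(u_n),u_n\rangle$, and substitutes back into the energy), but this is a cosmetic variation.
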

\begin{proof}
Suppose that ${u_n}$ is a $(PS)_c$ sequence of $\mathcal{I}$, that is,
\begin{equation*}\label{eq}
\mathcal{I}(u_n)\to c~~\text{ and}~~~\mathcal{I}'(u_n)\to 0~\text{in }~ (H_0^1(\Omega))^{-1}.
\end{equation*}
We obtain
\begin{equation}\label{2.17eq}
\frac{1}{2} \int_{\Omega} |\nabla u_n|^2dx -\frac{\lambda}{2p} \int_{\Omega}\int_{\Omega} \frac{|u_n(x)|^{p}|u_n(y)|^{p} }{|x-y|^{\alpha}}dxdy-  \frac{\mu}{q} \int_{\Omega} \frac{|u_n|^{q}}{|x|^s}dx=\mathcal{I}(u_n) +o_n(1)
\end{equation}
and
$$\int_{\Omega}  |\nabla u_n|^2dx - \lambda\int_{\Omega}\int_{\Omega} \frac{|u_n(x)|^{p}|u_n(y)|^{p} }{|x-y|^{\alpha}}dxdy -\mu\int_{\Omega} \frac{|u_n|^{q}}{|x|^s}dx=o_n(1)||u_n||,$$
as $n\to +\infty$. Then  we obtain
\begin{equation}\label{2.18eq}
\begin{aligned}
c+o_n(1)||u_n||&\geq \mathcal{I}(u_n) -\frac{1}{2}\langle \mathcal{I}'(u_n), u_n\rangle\\
&=\Big(\frac{1}{2}-\frac{1}{2p} \Big)\lambda \int_{\Omega}\int_{\Omega} \frac{|u_n(x)|^{p}|u_n(y)|^{p} }{|x-y|^{\alpha}}dxdy+ \Big(\frac{1}{2}-\frac{1}{q} \Big) \mu \int_{\Omega} \frac{|u_n|^{q}}{|x|^s}dx.
\end{aligned}
\end{equation}
We first show that $\{u_n\}$ is bounded in $H_0^1(\Omega)$ by splitting the proof in two cases.

Case $1$: $s<2$.

In this case, we deduce that $q>2$, then Combining this  with \eqref{2.18eq} and $p>1$, we have
$$ \int_{\Omega}\int_{\Omega} \frac{|u_n(x)|^{p}|u_n(y)|^{p} }{|x-y|^{\alpha}}dxdy\leq C (1+||u_n||)~\text{and}~  \int_{\Omega} \frac{|u_n|^{q}}{|x|^s}dx \leq C (1+||u_n||),$$
which, combined with \eqref{2.17eq}, implies
\begin{equation}\label{2.19eq}
\begin{aligned}
||u_n||^2&=2C+o_n(1)+\frac{\lambda}{p} \int_{\Omega}\int_{\Omega} \frac{|u_n(x)|^{p}|u_n(y)|^{p} }{|x-y|^{\alpha}}dxdy+  \frac{2\mu}{q} \int_{\Omega} \frac{|u_n|^{q}}{|x|^s}dx\\
&\leq C (1+||u_n||).
\end{aligned}
\end{equation}
Then the boundedness of $\{u_n\}$ in $H_0^1(\Omega)$ can be readily deduced.

\medskip

Case $2$: $s=2$.

In this case we get $q=2$, and then we deduce from \eqref{2.18eq} and $p>1$  that
$$ \int_{\Omega}\int_{\Omega} \frac{|u_n(x)|^{p}|u_n(y)|^{p} }{|x-y|^{\alpha}}dxdy\leq C (1+||u_n||).$$
So the  Hardy's inequality, $0<\mu < \bar{\mu}$ and \eqref{2.17eq} give that
\begin{equation*}\label{2.19eq*}
\begin{aligned}
\Big(1-\frac{\mu}{\bar{\mu}} \Big)||u_n||^2&=2C+o_n(1)+\frac{2\lambda}{p} \int_{\Omega}\int_{\Omega} \frac{|u_n(x)|^{p}|u_n(y)|^{p} }{|x-y|^{\alpha}}dxdy\\
&\leq C (1+||u_n||),
\end{aligned}
\end{equation*}
which implies the boundedness of $\{u_n\}$.

\medskip

Thus, up to a subsequence, there exists $u_0\in H_0^1(\Omega)$ such that $u_n\rightharpoonup u_0$ in $H_0^1(\Omega)$,  $u_n\rightharpoonup  u_0$ in $L^{2^*(s)}(\Omega)$ and  $u_n\to u_0$ a.e. in $\Omega$ as $n\to +\infty$. Thus, If $1<p<2_\alpha^*$ and  $2< q < 2^*(s)$, we have that
$$\int_{\Omega}\int_{\Omega} \frac{|u_n(x)|^{p}|u_n(y)|^{p} }{|x-y|^{\alpha}}dxdy \to \int_{\Omega}\int_{\Omega} \frac{|u_0(x)|^{p}|u_0(y)|^{p} }{|x-y|^{\alpha}}dxdy$$
and
$$\int_{\Omega} \frac{|u_n|^{q}}{|x|^s}dx \to \int_{\Omega} \frac{|u_0|^{q}}{|x|^s}dx $$
as $n\to +\infty$.
If $p=2_\alpha^*$ and  $ q =2^*(s)$, we have that
$$\int_{\Omega}\frac{|u_n(y)|^{2_\alpha^*}}{|x-y|^{\alpha}}dy |u_n|^{2_{\alpha}^*-2}u_n\rightharpoonup \int_{\Omega}\frac{|u_0(y)|^{2_\alpha^*}}{|x-y|^{\alpha}}dy |u_0|^{2_{\alpha}^*-2}u_0~~\text{in}~L^{\frac{2N}{N+2}}(\Omega)$$
and
$$u_n\rightharpoonup  u_0 ~~\text{in}~L^{2^*(s)}(\Omega, |x|^{-s}dx)$$
as $n\to +\infty$.  Since $\mathcal{I}'(u_n)\to 0$ in  $(H_0^1(\Omega))^{-1}$, for any $\phi\in H_0^1(\Omega)$ we have
$$o_n(1)= \int_{\Omega} \nabla u_n \nabla \phi dx
-\lambda\int_{\Omega} \int_{\Omega}\frac{|u_n(y)|^{p} |u_n(x)|^{p-2}u_n(x)\phi(x) }{|x-y|^{\alpha}}dxdy-\mu \int_{\Omega} \frac{|u_n|^{q-2}u_n \phi}{|x|^s} dx .$$
Thus
$$\int_{\Omega} \nabla u_0 \nabla \phi dx  =\lambda\int_{\Omega} \int_{\Omega}\frac{|u_0(y)|^{p} |u_0(x)|^{p-2}u_0(x)\phi(x) }{|x-y|^{\alpha}}dxdy+\mu \int_{\Omega} \frac{|u_0|^{q-2}u_0 \phi}{|x|^s} dx,~\forall \phi \in H_0^1(\Omega),$$
which yields that $u_0$ is a weak solution of \eqref{eq1.1}. Moreover,
$$\mathcal{I}(u_0)=\mathcal{I}(u_0) -\frac{1}{2}\langle \mathcal{I}'(u_0), u_0\rangle=\Big(\frac{1}{2}-\frac{1}{2p} \Big)\lambda \int_{\Omega}\int_{\Omega} \frac{|u_0(x)|^{p}|u_0(y)|^{p} }{|x-y|^{\alpha}}dxdy+ \Big(\frac{1}{2}-\frac{1}{q} \Big) \mu \int_{\Omega} \frac{|u_0|^{q}}{|x|^s}dx\geq0.$$

Next we prove that $u_n\to u_0$ in $H_0^1(\Omega)$.   Denote $v_n:=u_n- u_0$, then $v_n\rightharpoonup  0$ in $L^{2^*}(\Omega)$, $v_n\to 0$ in $L^q(\Omega)$ for $q\in [1, 2^*)$ and  $v_n\to 0$ a.e. in $\Omega$ as $n\to +\infty$. We consider each case separately.

$(1)$ Let  $1<p<2_\alpha^*$ and  $2< q < 2^*(s)$.
By the Hardy-Littlewood-Sobolev inequality, we know that
$$\int_{\Omega}\int_{\Omega} \frac{|v_n(x)|^{p}|v_n(y)|^{p} }{|x-y|^{\alpha}}dxdy \leq C(N,\alpha) \Big(\int_{\Omega} |v_n|^{\frac{2N}{2N-\alpha}p}\Big)^{\frac{2N-\alpha}{2N}}\rightarrow 0.$$
combining this with $v_n\rightharpoonup  0 ~~\text{in}~L^{q}(\Omega, |x|^{-s}dx)$ and $\mathcal{I}'(u_n)\to 0$ in  $(H_0^1(\Omega))^{-1}$, we deduce that
\begin{equation}\label{eq2.201}
\begin{aligned}
o_n(1)&=\langle \mathcal{I}'(u_n), u_n\rangle-\langle \mathcal{I}'(u_0), u_0\rangle \\
&=||v_n||^2-\lambda \int_{\Omega}\int_{\Omega} \frac{|v_n(x)|^{p}|v_n(y))|^{p} }{|x-y|^{\alpha}}dxdy -\mu \int_{\Omega} \frac{|v_n|^{q}}{|x|^s}dx,
\end{aligned}
\end{equation}
which shows that $u_n\to u_0$ in $H_0^1(\Omega)$ if  $1<p<2_\alpha^*$ and  $2< q < 2^*(s)$.

\medskip

$(2)$ Let  $1<p<2_\alpha^*$, $2< q =2^*(s)$ and \(c<\frac{2-s}{2(N-s)} \frac{1}{\mu^{\frac{2}{2^*(s)-2}}} \mu_{s}(\mathbb{R}^N)^{\frac{N-s}{2-s}}\). By the Brezis-Lieb Lemma, we have
$$\int_{\Omega} |\nabla u_n|^2dx=\int_{\Omega} |\nabla v_n|^2dx + \int_{\Omega} |\nabla u_0|^2dx+o_n(1)$$
and
$$ \int_{\Omega} \frac{|u_n|^{2^*(s)}}{|x|^s}dx =\int_{\Omega} \frac{|v_n|^{2^*(s)}}{|x|^s}dx+\int_{\Omega} \frac{|u_0|^{2^*(s)}}{|x|^s}dx+o_n(1)$$
as $n\to +\infty$. It follows from $\mathcal{I}(u_n)\to c$ and $\mathcal{I}'(u_n)\to 0$ in $(H_0^1(\Omega))^{-1}$ that
\begin{equation}\label{2.21eq}
\begin{aligned}
c+o_n(1)= \mathcal{I}(u_n)
& = \frac{1}{2} \int_{\Omega} |\nabla v_n|^2dx +\frac{1}{2} \int_{\Omega} |\nabla u_0|^2dx  -\frac{\lambda}{2p} \int_{\Omega}\int_{\Omega} \frac{|u_0(x)|^{p}|u_0(y)|^{p} }{|x-y|^{\alpha}}dxdy\\
&- \frac{\mu}{2^*(s)}\int_{\Omega} \frac{|v_n|^{2^*(s)}}{|x|^s}dx- \frac{\mu}{2^*(s)}\int_{\Omega} \frac{|u_0|^{2^*(s)}}{|x|^s}dx+o_n(1)\\
&=\mathcal{I}(u_0) +\frac{1}{2} \int_{\Omega} |\nabla v_n|^2dx - \frac{\mu}{2^*(s)}\int_{\Omega} \frac{|v_n|^{2^*(s)}}{|x|^s}dx+o_n(1)
\end{aligned}
\end{equation}
and
\begin{equation}\label{2.22eq}
\begin{aligned}
o_n(1)
&=\int_{\Omega} |\nabla v_n|^2dx +\int_{\Omega} |\nabla u_0|^2dx -\lambda\int_{\Omega}\int_{\Omega} \frac{|u_0(x)|^{p}|u_0(y)|^{p} }{|x-y|^{\alpha}}dxdy \\
&  -\mu\int_{\Omega} \frac{|v_n|^{2^*(s)}}{|x|^s}dx-\mu\int_{\Omega} \frac{|u_0|^{2^*(s)}}{|x|^s}dx \\
&=\langle \mathcal{I}'(u_0), u_0\rangle +\int_{\Omega} |\nabla v_n|^2dx -\mu\int_{\Omega} \frac{|v_n|^{2^*(s)}}{|x|^s}dx\\
&= \int_{\Omega} |\nabla v_n|^2dx -\mu\int_{\Omega} \frac{|v_n|^{2^*(s)}}{|x|^s}dx.
\end{aligned}
\end{equation}
Thus by using \eqref{2.21eq} and \eqref{2.22eq}, we get
\begin{equation}\label{2.221eq}
\begin{aligned}
\Big( \frac{1}{2} - \frac{1}{2^*(s)}\Big)\int_{\Omega} |\nabla v_n|^2dx &=\frac{1}{2} \int_{\Omega} |\nabla v_n|^2dx - \frac{\mu}{2^*(s)}\int_{\Omega} \frac{|v_n|^{2^*(s)}}{|x|^s}dx\\
&\leq c+o_n(1)\\
&< \Big( \frac{1}{2} - \frac{1}{2^*(s)}\Big) \frac{1}{\mu^{\frac{2}{2^*(s)-2}}} \mu_{s}(\mathbb{R}^N)^{\frac{2^*(s)}{2^*(s)-2}}.
\end{aligned}
\end{equation}
If $\int_{\Omega} |\nabla v_n|^2dx\to 0$ as $n\to +\infty$, the proof is completed. Otherwise, by the definition of \(\mu_{s}(\mathbb{R}^N)\) and \eqref{2.22eq} we obtain that
\[ c\geq \Big( \frac{1}{2} - \frac{1}{2^*(s)}\Big) \int_{\Omega}  |\nabla u_n|^2dx  \geq \Big( \frac{1}{2} - \frac{1}{2^*(s)}\Big) \frac{1}{\mu^{\frac{2}{2^*(s)-2}}} \mu_{s}(\mathbb{R}^N)^{\frac{2^*(s)}{2^*(s)-2}}=\frac{2-s}{2(N-s)} \frac{1}{\mu^{\frac{2}{2^*(s)-2}}} \mu_{s}(\mathbb{R}^N)^{\frac{N-s}{2-s}},\]
which  is a contradiction to \eqref{2.221eq}. Thus  $u_n\to u_0$ in $H_0^1(\Omega)$.

\medskip

$(3)$ Let  $p=2_\alpha^*$, $2< q <2^*(s)$ and \(c< \frac{N-\alpha+2}{2(2N-\alpha)}  \Big(\frac{1}{\lambda C(N,\alpha) }  \Big)^{\frac{1}{2_\alpha^*-1}} S^{\frac{2_\alpha^*}{2_\alpha^*-1}}\). It follows from \cite{Gao-Yang2018SCM} that
\( S =C(N, \alpha)^{1/2_\alpha^*} S_{H,L} \), where
\begin{equation*}
S_{H,L}:=\inf_{u\in D^{1,2}(\mathbb{R}^N)\backslash \{0\} } \frac{\int_{\mathbb{R}^N}|\nabla u|^2dx
}{
\Big(\int_{\mathbb{R}^N}\int_{\mathbb{R}^N} \frac{|u|^{2_{\alpha}^*}(x)|u(y)|^{2_{\alpha}^*} }{|x-y|^{\alpha}}dxdy \Big)^{1/2_{\alpha}^*}}.
\end{equation*}
Thus, we get that
\[c<\frac{N-\alpha+2}{2(2N-\alpha)}  \Big(\frac{1}{\lambda C(N,\alpha) }  \Big)^{\frac{1}{2_\alpha^*-1}} S^{\frac{2_\alpha^*}{2_\alpha^*-1}} =\frac{N+2-\alpha}{2(2N-\alpha)} \frac{1}{\lambda^{\frac{1}{2_\alpha^*-1}}} S_{H,L}^{\frac{2N-\alpha}{N+2-\alpha}} .\]
By the Brezis-Lieb Lemma, we have
$$\int_{\Omega}\int_{\Omega}\frac{|u_n(y)|^{2_\alpha^*} |u_n(x)|^{2_{\alpha}^*} }{|x-y|^{\alpha}}dxdy=\int_{\Omega}\int_{\Omega}\frac{|v_n(y)|^{2_\alpha^*} |v_n(x)|^{2_{\alpha}^*} }{|x-y|^{\alpha}}dxdy +\int_{\Omega}\int_{\Omega}\frac{|u_0(y)|^{2_\alpha^*} |u_0(x)|^{2_{\alpha}^*} }{|x-y|^{\alpha}}dxdy +o_n(1).$$
Similar to \eqref{2.21eq} and \eqref{2.22eq}, we have that
\begin{equation}\label{2.21eq2}
\begin{aligned}
c\leftarrow \mathcal{I}(u_n) =\mathcal{I}(u_0) +\frac{1}{2} \int_{\Omega} |\nabla v_n|^2dx - \frac{\lambda}{2\cdot2_\alpha^*}\int_{\Omega}\int_{\Omega}\frac{|v_n(y)|^{2_\alpha^*} |v_n(x)|^{2_{\alpha}^*} }{|x-y|^{\alpha}}dxdy
\end{aligned}
\end{equation}
and
\begin{equation}\label{2.22eq2}
\begin{aligned}
o_n(1)= \int_{\Omega} |\nabla v_n|^2dx -\lambda\int_{\Omega}\int_{\Omega}\frac{|v_n(y)|^{2_\alpha^*} |v_n(x)|^{2_{\alpha}^*} }{|x-y|^{\alpha}}dxdy .
\end{aligned}
\end{equation}
Thus by using \eqref{2.21eq2} and \eqref{2.22eq2}, we get

\begin{equation}\label{2.221eq2}
\begin{aligned}
\Big( \frac{1}{2} - \frac{1}{2\cdot2_\alpha^*}\Big)\int_{\Omega} |\nabla v_n|^2dx &=\frac{1}{2} \int_{\Omega} |\nabla v_n|^2dx - \frac{\lambda}{2\cdot2_\alpha^*}\int_{\Omega}\int_{\Omega}\frac{|v_n(y)|^{2_\alpha^*} |v_n(x)|^{2_{\alpha}^*} }{|x-y|^{\alpha}}dxdy\\
&\leq c+o_n(1)\\
&< \Big( \frac{1}{2} - \frac{1}{2\cdot2_\alpha^*}\Big) \frac{1}{\lambda^{\frac{1}{2_\alpha^*-1}}} S_{H,L}^{\frac{2_\alpha^*}{2_\alpha^*-1}}=\frac{N+2-\alpha}{2(2N-\alpha)} \frac{1}{\lambda^{\frac{1}{2_\alpha^*-1}}} S_{H,L}^{\frac{2N-\alpha}{N+2-\alpha}} .
\end{aligned}
\end{equation}
If $\int_{\Omega} |\nabla v_n|^2dx\to 0$ as $n\to +\infty$, the proof is completed. Otherwise, by the definition of \(S_{H,L}\), \eqref{2.21eq2} and \eqref{2.22eq2} we obtain that
\[ c\geq \Big( \frac{1}{2} - \frac{1}{2\cdot2_\alpha^*}\Big) \int_{\Omega}  |\nabla u_n|^2dx  \geq \Big( \frac{1}{2} - \frac{1}{2\cdot2_\alpha^*}\Big) \frac{1}{\lambda^{\frac{1}{2_\alpha^*-1}}} S_{H,L}^{\frac{2_\alpha^*}{2_\alpha^*-1}}=\frac{N+2-\alpha}{2(2N-\alpha)} \frac{1}{\lambda^{\frac{1}{2_\alpha^*-1}}} S_{H,L}^{\frac{2N-\alpha}{N+2-\alpha}} ,\]
which  is a contradiction to \eqref{2.221eq2}. Thus  $u_n\to u_0$ in $H_0^1(\Omega)$.

\medskip

$(4)$ Let  $1<p<2_\alpha^*$, $2= q =2^*(s)$( that is, $s=2$). By the Brezis-Lieb Lemma, we have
$$\int_{\Omega} \frac{|u_n|^2}{|x|^2}dx=\int_{\Omega}\frac{|v_n|^2}{|x|^2}dx + \int_{\Omega} \frac{|u_0|^2}{|x|^2}dx+o_n(1).$$
Thus, we have
\begin{equation}\label{2.22eq3}
\begin{aligned}
o_n(1)
&=\langle \mathcal{I}'(u_0), u_0\rangle +\int_{\Omega} |\nabla v_n|^2dx -\mu\int_{\Omega} \frac{|v_n|^{2}}{|x|^2}dx\\
&= \int_{\Omega} |\nabla v_n|^2dx -\mu\int_{\Omega} \frac{|v_n|^{2}}{|x|^2}dx\\
&\geq \Big(1-\frac{\mu}{\bar{\mu}}\Big)\int_{\Omega} |\nabla v_n|^2dx\geq0,
\end{aligned}
\end{equation}
which implies  $u_n\to u_0$ in $H_0^1(\Omega)$ provided that $0<\mu<\bar{\mu}$.
\end{proof}

\begin{Lem}\label{Lem2.3}
Let \(\Omega\subset \mathbb{R}^N\) be an  open bounded domain with $0\in \Omega$. Assume  $0\leq s\leq2$, $0<\alpha<N$, $p\in(1, 2_{\alpha}^*]$ and $q\in[2, 2^*(s)]$. Then
\begin{itemize}
\item[$(1)$]  if $q=2^*(s)$, $\mu>0$ and one of the following conditions is satisfied:
    \begin{itemize}
    \item[(\romannumeral1)] $2_\alpha^*-1<p<2_\alpha^*$ and $\lambda>0$,
    \item[(\romannumeral2)] $1<p\leq2_\alpha^*-1$ with $\alpha\leq4$ and $\lambda>0$ is sufficiently large,
    \end{itemize}
    then there exists a nonnegative function $u_{s,\varepsilon}\in H_0^1(\Omega)$ such that
    \[\max_{ t>0}\mathcal{I}(tu_{s,\varepsilon})<\frac{2-s}{2(N-s)} \frac{1}{\mu^{\frac{2}{2^*(s)-2}}} \mu_{s}(\mathbb{R}^N)^{\frac{N-s}{2-s}}.\]
    where $\mu_{s}(\mathbb{R}^N)$ is the best Sobolev-Hardy constant.
\item[$(2)$] if $p=2_\alpha^*$, $\lambda>0$ and one of the following conditions id satisfied:
    \begin{itemize}
     \item[(\romannumeral1)] $N=3$, $s<1$, $\frac{2(N-s)}{N-2}-2<q< \frac{2(N-s)}{N-2}$ and $\mu>0$,
     \item[(\romannumeral2)] $N=3$, $s<1$, $2< q\leq \frac{2(N-s)}{N-2}-2$ and $\mu>0$ is sufficiently large,
     \item[(\romannumeral3)] $N=3$, $1\leq s<2$, $2<q< \frac{2(N-s)}{N-2}$ and $\mu>0$,
     \item[(\romannumeral4)] $N\geq4$, $0<s<2$, $2<q< \frac{2(2-s)}{N-2}$ and $\mu>0$,
    \end{itemize}
  then there exists a nonnegative function $u_{\varepsilon}\in H_0^1(\Omega)$ such that
    \[\max_{ t>0}\mathcal{I}(tu_{\varepsilon})\leq \frac{N-\alpha+2}{2(2N-\alpha)}  \Big(\frac{1}{\lambda C(N,\alpha) }  \Big)^{\frac{1}{2_\alpha^*-1}} S^{\frac{2_\alpha^*}{2_\alpha^*-1}},\]
    where $S$ is the best Sobolev constant.
\end{itemize}
\end{Lem}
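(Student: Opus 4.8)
The plan is to prove both items by a Br\'ezis--Nirenberg-type mountain-pass energy estimate: in each case exactly one of the two nonlinearities is critical and, via its rescaled extremals, produces the stated threshold level, while the other term is strictly subcritical and, carrying a negative sign in $\mathcal I$, only lowers the mountain-pass level; one must quantify this lowering and show that it beats the truncation error. To this end I would fix a ball $B_{2\delta}(0)\subset\Omega$ and a cut-off $\eta\in C_c^\infty(B_{2\delta}(0))$ with $0\le\eta\le1$, $\eta\equiv1$ on $B_\delta(0)$, and take $u_{s,\varepsilon}:=\eta\,U_{s,\varepsilon}$ in item $(1)$, where $U_{s,\varepsilon}(x)=\varepsilon^{-(N-2)/2}U_s(x/\varepsilon)$ and $U_s$ is the positive radial extremal of $\mu_s(\mathbb R^N)$ (decaying like $|x|^{-(N-2)}$), and $u_\varepsilon:=\eta\,U_\varepsilon$ in item $(2)$, with $U_\varepsilon$ the rescaled Aubin--Talenti bubble, which is an extremal for $S_{H,L}$ by the identity $S=C(N,\alpha)^{1/2_\alpha^*}S_{H,L}$. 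The classical truncation expansions (cf.\ \cite{Brezis-Nirenberg1983CPAM,Ghoussoub-Kang2004AIHP,Gao-Yang2018SCM}) then give, as $\varepsilon\to0^+$,
\[
\frac{\|u_{s,\varepsilon}\|^2}{\big(\int_\Omega|x|^{-s}|u_{s,\varepsilon}|^{2^*(s)}\,dx\big)^{2/2^*(s)}}\le \mu_s(\mathbb R^N)+C\varepsilon^{N-2},\qquad \frac{\|u_\varepsilon\|^2}{\big(\int_\Omega\!\int_\Omega|x-y|^{-\alpha}|u_\varepsilon(x)|^{2_\alpha^*}|u_\varepsilon(y)|^{2_\alpha^*}\big)^{1/2_\alpha^*}}\le S_{H,L}+C\varepsilon^{N-2},
\]
together with the scaling lower bounds, valid for small $\varepsilon$ with $c>0$,
\[
\int_\Omega\!\int_\Omega\frac{|u_{s,\varepsilon}(x)|^{p}|u_{s,\varepsilon}(y)|^{p}}{|x-y|^\alpha}\,dx\,dy\ \ge\ c\,\varepsilon^{\,2N-\alpha-(N-2)p},\qquad \int_\Omega\frac{|u_\varepsilon|^{q}}{|x|^{s}}\,dx\ \ge\ c\,\varepsilon^{\,N-s-(N-2)q/2}
\]
(the first when $p>2_\alpha^*/2$, the second when $q>(N-s)/(N-2)$; otherwise the left-hand sides are only larger).

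I would then carry out the one-dimensional reduction. Writing $\mathcal I(tv)=\tfrac{t^2}{2}a-\tfrac{t^\theta}{\theta}b-\tfrac{t^\vartheta}{\vartheta}d$, where the $b$-term is the critical one ($\theta=2^*(s)$, $b=\mu\int|x|^{-s}|v|^{2^*(s)}$ in $(1)$; $\theta=2\cdot2_\alpha^*$, $b=\lambda\int\!\int|x-y|^{-\alpha}|v|^{2_\alpha^*}|v|^{2_\alpha^*}$ in $(2)$) and the $d$-term is the perturbation ($\vartheta=2p$, $d=\lambda\int\!\int|x-y|^{-\alpha}|v|^p|v|^p$ in $(1)$; $\vartheta=q$, $d=\mu\int|x|^{-s}|v|^q$ in $(2)$), with $2<\min\{\theta,\vartheta\}$, one computes $\max_{t>0}\big(\tfrac{t^2}{2}a-\tfrac{t^\theta}{\theta}b\big)=(\tfrac12-\tfrac1\theta)\big(a\,b^{-2/\theta}\big)^{\theta/(\theta-2)}$, and the two quotient bounds above identify this quantity with the asserted threshold times $(1+C\varepsilon^{N-2})$. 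Since the maximiser $t_\varepsilon$ of $\mathcal I(tv)$ solves $a=t_\varepsilon^{\theta-2}b+t_\varepsilon^{\vartheta-2}d$ with $a$ bounded away from $0$ and $\infty$ and $b$ bounded away from $0$ (the critical integral tends to a positive constant), $t_\varepsilon$ stays in a fixed compact subinterval of $(0,\infty)$, whence
\[
\max_{t>0}\mathcal I(tv)\ \le\ (\text{threshold})\,(1+C\varepsilon^{N-2})-c\,d,\qquad c>0\ \text{independent of }\varepsilon .
\]
It remains to compare $d$ with $\varepsilon^{N-2}$. In $(1)$, $d\ge c\lambda\,\varepsilon^{2N-\alpha-(N-2)p}$ and $2N-\alpha-(N-2)p<N-2\Leftrightarrow p>2_\alpha^*-1$, so for $2_\alpha^*-1<p<2_\alpha^*$ the perturbation wins once $\varepsilon$ is small (this is case (i)). In $(2)$, $d\ge c\mu\,\varepsilon^{N-s-(N-2)q/2}$ and $N-s-(N-2)q/2<N-2\Leftrightarrow q>\tfrac{2(2-s)}{N-2}=2^*(s)-2$, which is imposed in case (i) and automatic in cases (iii)--(iv) (there $N=3,\ 1\le s<2$, resp.\ $N\ge4$, force $2^*(s)-2\le2<q$; and $q>(N-s)/(N-2)$ holds as well), so again the perturbation wins for $\varepsilon$ small.

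In the remaining cases, where the perturbation is not of strictly lower order than $\varepsilon^{N-2}$, I would instead fix $\varepsilon$ small and discard the negative critical term: $\mathcal I(tv)\le\tfrac{t^2}{2}a-\tfrac{t^\vartheta}{\vartheta}d$, so $\max_{t>0}\mathcal I(tv)\le(\tfrac12-\tfrac1\vartheta)\,a^{\vartheta/(\vartheta-2)}d^{-2/(\vartheta-2)}$, which tends to $0$ as the free coupling constant ($\lambda$ in $(1)$, $\mu$ in $(2)$) tends to $+\infty$, whereas the relevant threshold is then a fixed positive number. This handles case (ii) of $(1)$ --- here $1<p\le2_\alpha^*-1$ is nonempty only if $2_\alpha^*>2$, i.e.\ $\alpha<4$, which accounts for the hypothesis $\alpha\le4$ --- and case (ii) of $(2)$. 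The bulk of the remaining work --- and the main obstacle --- is the careful derivation of the truncation expansions with the precise exponents, especially the $O(\varepsilon^{N-2})$ Dirichlet-energy error, which is the delicate borderline contribution in dimension $N=3$, together with the two scaling lower bounds for the convolution and the Hardy--Sobolev integrals and the bookkeeping of the several subcases; note that the inequalities obtained are in fact strict, hence a fortiori imply the (non-strict) one stated in part $(2)$.
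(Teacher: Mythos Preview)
Your proposal is correct and follows essentially the same Br\'ezis--Nirenberg strategy as the paper: truncated extremals $u_{s,\varepsilon}$ (resp.\ $u_\varepsilon$) for the critical Hardy--Sobolev (resp.\ Hardy--Littlewood--Sobolev) term, the standard $O(\varepsilon^{N-2})$ truncation expansions, a one-dimensional reduction with $t_\varepsilon$ bounded away from $0$ and $\infty$, and the exponent comparison $2N-\alpha-(N-2)p$ versus $N-2$ in $(1)$ and $N-s-\tfrac{(N-2)q}{2}$ versus $N-2$ in $(2)$. The one noteworthy difference is your treatment of the ``large coupling'' subcases (ii): the paper couples the parameter to $\varepsilon$ (taking $\lambda=\varepsilon^{-\theta}$, resp.\ $\mu=\varepsilon^{-\theta}$, and then $\varepsilon\to0$) so that the perturbation still beats the $O(\varepsilon^{N-2})$ error, whereas you simply fix a test function, drop the critical term, and observe that $\max_{t>0}\mathcal I(tv)\le C\cdot(\text{coupling})^{-2/(\vartheta-2)}\to0$ while the threshold stays fixed; your argument is shorter and does not require the truncated extremal at all in those subcases.
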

\begin{proof}
$(1)$ Let $1<p<2_\alpha^*$ and $2< q =2^*(s)$.
We start recalling that in \cite{Lieb1983AM,Talenti1976AMPA} for some $k>0$ and \( 0 \leq s < 2 \), the following function
\[ U_s(x) := \big(k\cdot(N-s)(N-2)  \big)^{\frac{N-2}{2(2-s)}} \left( k + |x|^{2-s} \right)^{-\frac{N-2}{2-s}} \]
is a least-energy solution to the  elliptic equation:
\[\begin{cases}
\displaystyle
\Delta u + \frac{u^{2^*(s)-1}}{|x|^{s}} = 0 & \text{in } \mathbb{R}^N, \\
u > 0 & \text{in } \mathbb{R}^N.
\end{cases}\]
Moreover, the best Hardy-Sobolev constant
\[ \mu_{s} \left( \mathbb{R}^N \right):=\inf \biggl\{ \int_{\mathbb{R}^N} |\nabla u|^{2} \, dx \, \biggl| \,  u \in D^{1,2}(\mathbb{R}^N) \text{ and } \int_{\mathbb{R}^N} \frac{|u|^{2^{*}(s)}}{|x|^{s}} \, dx=1 \biggr\}\]
can be achieved by $U_s(x)$.
Since \( 0 \in \Omega \), we can choose \( \rho > 0 \) such that \( B(0, 2\rho) \subset \Omega \) and then define a nonnegative function \( \psi(x) \in \mathcal{D}(\Omega) \) such that \( \psi(x) \equiv 1 \) on \( B(0, \rho) \), \( \psi(x) = 0 \) if \( |x| > 2\rho \). Set, for \( \varepsilon > 0 \),

\[U_{s,\varepsilon}(x) := \varepsilon^{-\frac{N-2}{2}} U_s \left( \frac{x}{\varepsilon} \right)  \quad \text{and} \quad u_{s,\varepsilon}(x) := \psi(x) U_{s,\varepsilon}(x).\]
By the definition of \( U_s \) and scaling properties, we have
\[\int_{\mathbb{R}^N} |\nabla U_{s,\varepsilon}|^2 \, dx=\int_{\mathbb{R}^N} \frac{|U_{s,\varepsilon}|^{2^{\ast}(s)}}{|x|^{s}} \, dx = \mu_{s} (\mathbb{R}^{N})^{\frac{N-s}{2-s}}.\]
By direct calculation we obtain, as \(\varepsilon \to 0^+\),
\[\int_{\Omega} |\nabla u_{s,\varepsilon}|^{2} \, dx = \int_{\mathbb{R}^{N}} |\nabla U_{s,\varepsilon}|^{2} \, dx + O(\varepsilon^{N-2}) = \mu_{s} (\mathbb{R}^{N})^{\frac{N-s}{2-s}} + O(\varepsilon^{N-2}),\]
\[\int_{\Omega} \frac{|u_{s,\varepsilon}|^{2^{*}(s)}}{|x|^{s}} \, dx = \int_{\mathbb{R}^{N}} \frac{|U_{s,\varepsilon}|^{2^{*}(s)}}{|x|^{s}} \, dx + O(\varepsilon^{N-s}) = \mu_{s} (\mathbb{R}^{N})^{\frac{N-s}{2-s}} + O(\varepsilon^{N-s}),\]
and
$$\begin{aligned}
&\int_\Omega\int_\Omega \frac{|u_{s,\varepsilon}(x)|^{p} |u_{s,\varepsilon}(y)|^{p}}{|x-y|^\alpha}dxdy\\
&\geqslant\int_{B_\delta} \int_{B_\delta}\frac{|U_{s,\varepsilon}(x) |^{p}|U_{s,\varepsilon}(y)|^{p}}{|x-y|^\alpha}dxdy \\
&=\int_{B_\delta}\int_{B_\delta}\frac{C\varepsilon^{(2-N)p}}{ \big(|k +|\frac{x}{\varepsilon}|^{2-s} \big)^{\frac{(N-2)p}{2-s}}|x-y|^\alpha \big(|k +|\frac{y}{\varepsilon}|^{2-s} \big)^{\frac{(N-2)p}{2-s}}}dxdy\\
&= \int_{B_\delta}\int_{B_\delta}\frac{C\varepsilon^{2N-\alpha-(N-2)p}}{ \big(k +|\frac{x}{\varepsilon}|^{2-s} \big)^{\frac{(N-2)p}{2-s}}|\frac{x}{\varepsilon}-\frac{y}{\varepsilon}|^\alpha \big(k +|\frac{y}{\varepsilon}|^{2-s} \big)^{\frac{(N-2)p}{2-s}}}d\frac{x}{\varepsilon}d\frac{y}{\varepsilon} \\
&=C\varepsilon^{2N-\alpha-(N-2)p}\int_{B_{\delta/\varepsilon}}\int_{B_{\delta/\varepsilon}}\frac{1}{ \big(k +|x|^{2-s} \big)^{\frac{(N-2)p}{2-s}}|x-y|^\alpha \big(k +|y|^{2-s} \big)^{\frac{(N-2)p}{2-s}}}dxdy\\
&=O(\varepsilon^{2N-\alpha-(N-2)p}),
\end{aligned}$$
where
$$\begin{aligned}
&\int_{B_{\delta/\varepsilon}}\int_{B_{\delta/\varepsilon}}\frac{1}{ \big(k +|x|^{2-s} \big)^{\frac{(N-2)p}{2-s}}|x-y|^\alpha \big(k +|y|^{2-s} \big)^{\frac{(N-2)p}{2-s}}}dxdy\\
&\leq C(N,\alpha) \bigg(\int_{|x|<\delta/\varepsilon}\frac{1}{ \big(k +|x|^{2-s} \big)^{\frac{(N-2)p}{2-s} \cdot \frac{2N}{2N-\alpha} }  }dx\bigg)^{\frac{2N-\alpha}{2N}}\\
&\leq \bigg(\int_{|x|<\delta/\varepsilon}\frac{r^{N-1}}{ \big(k +|r|^{2-s} \big)^{\frac{(N-2)p}{2-s} \cdot \frac{2N}{2N-\alpha} }  }dr\bigg)^{\frac{2N-\alpha}{2N}}
\leq C.
\end{aligned}$$
For any $t>0$,  we have
\begin{equation*}
\begin{aligned}
\mathcal{I}(tu_{s,\varepsilon})&= \frac{t^2}{2} \int_{\Omega} |\nabla u_{s,\varepsilon}|^2dx -\frac{t^{2p}\lambda}{2p} \int_{\Omega}\int_{\Omega} \frac{|u_{s,\varepsilon}(x)|^{p}|u_{s,\varepsilon}(y)|^{p} }{|x-y|^{\alpha}}dxdy-  \frac{t^{2^*(s)}\mu}{2^*(s)} \int_{\Omega} \frac{|u_{s,\varepsilon}|^{2^*(s)}}{|x|^s}dx\\
&\leq \frac{t^2}{2} \Big( \mu_{s} (\mathbb{R}^{N})^{\frac{N-s}{2-s}} + O(\varepsilon^{N-2})\Big)- \frac{t^{2p}\lambda}{2p} O(\varepsilon^{2N-\alpha-(N-2)p}) - \frac{t^{2^*(s)}\mu}{2^*(s)} \Big( \mu_{s} (\mathbb{R}^{N})^{\frac{N-s}{2-s}} + O(\varepsilon^{N-s}) \Big)\\
&=:h(t).
\end{aligned}
\end{equation*}
Since $1<p<2_\alpha^*$ and $2< q =2^*(s)$, then it is easy to see that $h(0)=0$ and $\lim\limits_{t\to+\infty }h(t)=-\infty$. Thus there exists $t_\varepsilon>0$ such that $h(t_\varepsilon)=\max\limits_{t>0}h(t)$ and
\begin{equation}\label{eq3.13}
\begin{aligned}
t_\varepsilon\Big(\mu_{s} (\mathbb{R}^{N})^{\frac{N-s}{2-s}} + O(\varepsilon^{N-2})\Big) -\lambda t_\varepsilon^{2p-1} O(\varepsilon^{2N-\alpha-(N-2)p})  -\mu t_\varepsilon^{2^*(s)-1}\Big( \mu_{s} (\mathbb{R}^{N})^{\frac{N-s}{2-s}} + O(\varepsilon^{N-s}) \Big)=0.
\end{aligned}
\end{equation}
For $\varepsilon>$ sufficiently small, we deduce from \eqref{eq3.13} that there exists a $t_0>0$ independent of $\varepsilon$ such that
\begin{equation*}
\begin{aligned}
t_\varepsilon&=\Bigg(\frac{\mu_{s} (\mathbb{R}^{N})^{\frac{N-s}{2-s}} + O(\varepsilon^{N-2}) -\lambda t^{2p-2} O(\varepsilon^{2N-\alpha-(N-2)p})}{\mu \mu_{s} (\mathbb{R}^{N})^{\frac{N-s}{2-s}} + O(\varepsilon^{N-s}) } \Bigg)^{\frac{1}{2^*(s)-2}}\\
&\leq\Bigg(\frac{\mu_{s} (\mathbb{R}^{N})^{\frac{N-s}{2-s}} + O(\varepsilon^{N-2}) }{\mu\mu_{s} (\mathbb{R}^{N})^{\frac{N-s}{2-s}} + O(\varepsilon^{N-s}) } \Bigg)^{\frac{1}{2^*(s)-2}}\\
&\leq \Big(\frac{1}{\mu}+ O(\varepsilon^{N-2})   \Big)^{\frac{1}{2^*(s)-2}}<t_0.
\end{aligned}
\end{equation*}
On the other hand,  by \eqref{eq3.13} again, there exists $t_1 > 0$ independent of $\varepsilon$ such that for $\varepsilon>0$,
\begin{equation*}
\begin{aligned}
t_\varepsilon&=\Bigg(\frac{\mu_{s} (\mathbb{R}^{N})^{\frac{N-s}{2-s}} + O(\varepsilon^{N-2}) -\lambda t^{2p-2} O(\varepsilon^{2N-\alpha-(N-2)p})}{\mu \mu_{s} (\mathbb{R}^{N})^{\frac{N-s}{2-s}} + O(\varepsilon^{N-s}) } \Bigg)^{\frac{1}{2^*(s)-2}}\\
&\geq \Bigg(\frac{\mu_{s} (\mathbb{R}^{N})^{\frac{N-s}{2-s}} + O(\varepsilon^{N-2}) - O(\varepsilon^{2N-\alpha-(N-2)p})}{\mu \mu_{s} (\mathbb{R}^{N})^{\frac{N-s}{2-s}} + O(\varepsilon^{N-s}) } \Bigg)^{\frac{1}{2^*(s)-2}}\\
&>\Big(\frac{1}{\mu}  -O(\varepsilon^{2N-\alpha-(N-2)p}) \Big)^{\frac{1}{2^*(s)-2}}>t_1.
\end{aligned}
\end{equation*}
Thus we have a lower and a upper bound for $t_\varepsilon$, independent of $\varepsilon$. Therefore, we get that
\begin{equation}\label{eq05.2}
\begin{aligned}
\max_{t\geq0}\mathcal{I}(tu_{s,\varepsilon})
&\leq \Big(\frac{1}{2}-\frac{1}{2^*(s)} \Big) \Bigg( \frac{ \int_{\Omega} |\nabla u_{\varepsilon}|^2dx}
{\Big(\mu \int_{\Omega} \frac{|u_{\varepsilon}|^{2^*(s)}}{|x|^s}dx \Big)^{\frac{2}{2^*(s)}}}\Bigg)^{\frac{2^*(s)}{2^*(s)-2}}-\lambda  O(\varepsilon^{2N-\alpha-(N-2)p})\\
& =\Big(\frac{1}{2}-\frac{1}{2^*(s)} \Big) \Bigg( \frac{\mu_{s} (\mathbb{R}^{N})^{\frac{N-s}{2-s}} + O(\varepsilon^{N-2})}
{\Big(\mu  \mu_{s} (\mathbb{R}^{N})^{\frac{N-s}{2-s}} + O(\varepsilon^{N-s}) \Big)^{\frac{2}{2^*(s)}}}\Bigg)^{\frac{2^*(s)}{2^*(s)-2}}-\lambda  O(\varepsilon^{2N-\alpha-(N-2)p})\\
&=\Big(\frac{1}{2}-\frac{1}{2^*(s)} \Big) \bigg(\frac{1}{\mu^{\frac{N-2}{N-s}}} \mu_{s} (\mathbb{R}^{N})  + O(\varepsilon^{N-2})\bigg)^{\frac{N-s}{2-s}}  -\lambda  O(\varepsilon^{2N-\alpha-(N-2)p})\\
& \leq \frac{2-s}{2(N-s)}\frac{1}{\mu^{\frac{N-2}{2-s}}} \mu_{s} (\mathbb{R}^{N})^{\frac{N-s}{2-s}}  + O(\varepsilon^{N-2})  -\lambda O(\varepsilon^{2N-\alpha-(N-2)p}) \\
&<\frac{2-s}{2(N-s)} \frac{1}{\mu^{\frac{2}{2^*(s)-2}}} \mu_{s}(\mathbb{R}^N)^{\frac{N-s}{2-s}}.
\end{aligned}
\end{equation}
Now we distinguish the following two cases:
\begin{itemize}
\item[(\romannumeral1)] In the case $2_\alpha^*>p>2_\alpha^*=\frac{2N-\alpha}{N-2}-1$, we know that \(2N-\alpha-(N-2)p<N-2 \). Combing this with \eqref{eq05.2},  we get
    $$\max_{t\geq0}\mathcal{I}(tu_{s,\varepsilon})\leq\frac{2-s}{2(N-s)}\frac{1}{\mu^{\frac{N-2}{2-s}}} \mu_{s} (\mathbb{R}^{N})^{\frac{N-s}{2-s}}  - O(\varepsilon^{2N-\alpha-(N-2)p}). $$ Thus, we get the conclusion for $\varepsilon$ small enough.
\item[(\romannumeral2)] In the case $1<p\leq2_\alpha^*=\frac{2N-\alpha}{N-2}-1$ with $\alpha\leq4$, we know that \(2N-\alpha-(N-2)p\geq N-2 \).  By \eqref{eq05.2}, we have $$\max_{t\geq0}\mathcal{I}(tu_{s,\varepsilon}) \leq\frac{2-s}{2(N-s)}\frac{1}{\mu^{\frac{N-2}{2-s}}} \mu_{s} (\mathbb{R}^{N})^{\frac{N-s}{2-s}} + O(\varepsilon^{N-2}) - \lambda O(\varepsilon^{2N-\alpha-(N-2)p}), $$
for $\lambda=\varepsilon^{-\theta}$ with $\theta>2N-\alpha-(N-2)(p+1)$, we also get the conclusion.
\end{itemize}

\medskip

$(2)$ Let $p=2_\alpha^*$ and  $2< q <2^*(s)$. It follows from  \cite[Theorem~ 1.42]{Willem1996} that  $U(x)=\frac{[N(N-2)]^{\frac{N-2}{4}}}{(1+|x|^2)^{\frac{N-2}{2}}}$ is a minimizer for $S$.
Let $\phi\in C_0^\infty(\Omega)$ be a nonnegative function such that
$\phi(x)=1,~~x\in B(0,\rho)$, $\phi(x)=0$, $x\in \Omega\setminus B(0,2\rho)$ and $0\leq \phi(x)\leq 1$, $x\in \Omega$.
For $\varepsilon>0$, we define,
$$\begin{aligned}
u_\varepsilon(x):=\phi(x)U_\varepsilon(x) ~~\text{where}~~U_\varepsilon(x) :=\varepsilon^{\frac{2-N}{2}}U\Big(\frac{x}{\varepsilon}\Big).\end{aligned}$$
It follows from   \cite{Gao-Yang2018SCM} and  \cite{Willem1996} that
$$|\nabla U_\varepsilon|_2^2=|U_\varepsilon|_{2^*}^{2^*}=S^{\frac N2},$$
$$\int_\Omega|\nabla u_\varepsilon|^2dx=S^{\frac N2}+O(\varepsilon^{N-2})=C(N,\alpha)^{\frac{N(N-2)}{2(2N-\alpha)}}S_{H,L}^{\frac N2}+O(\varepsilon^{N-2})$$
and
$$\displaystyle{\int_{\Omega}}\displaystyle{\int_{\Omega}} \frac{|u_\varepsilon(x)|^{2_{\alpha}^*}|u_\varepsilon(y)|^{2_{\alpha}^*} }{|x-y|^{\alpha}}dxdy \geq C(N,\alpha)^{\frac{N}{2}} S_{H,L}^{\frac{2N-\alpha}{2}} -O(\varepsilon^{\frac{2N-\alpha}{2}}) = C(N,\alpha) S^{\frac{2N-\alpha}{2}} -O(\varepsilon^{\frac{2N-\alpha}{2}}).$$
Direct computations show that
\begin{equation}\label{eq2.3002}
\begin{aligned}
\int_{\Omega} \frac{|u_\varepsilon|^{q}}{|x|^s}dx=
\begin{cases}
&C\varepsilon^{N-\frac{(N-2)q}{2}-s},~~~~~~~~\text{if }~q> \frac{N-s}{N-2},\\
&C\varepsilon^{N-\frac{(N-2)q}{2}-s}|ln\varepsilon |,~~\text{if }~q= \frac{N-s}{N-2}, \\&C\varepsilon^{\frac{N-2}{2}q},~~~~~~~~~~~~~~~~\text{if }~ q= \frac{N-s}{N-2}.
\end{cases}
\end{aligned}
\end{equation}
Indeed, on one hand,
\begin{equation*}
\begin{aligned}
\int_{\Omega} \frac{|u_\varepsilon|^{q}}{|x|^s}dx &= \int_{B_{2\rho}}\frac{|\phi(x)U_\varepsilon(x)|^{q}}{|x|^s}dx  \geq \int_{B_{\rho}}\frac{|U_\varepsilon(x)|^{q}}{|x|^s}dx\\
&= C\int_{B_\rho} \frac{\varepsilon^{\frac{(2-N)q}{2}}}{|x|^s\big(1+|\frac{x}{\varepsilon}|^2 \big)^{\frac{(N-2)q}{2}}} dx\\
&=C\int_{B_\rho} \frac{\varepsilon^{\frac{(2-N)q}{2}-s}}{|\frac{x}{\varepsilon}|^s \big(1+|\frac{x}{\varepsilon}|^2 \big)^{\frac{(N-2)q}{2}}} dx\\
&=C\int_{|x|<\rho/\varepsilon} \frac{\varepsilon^{N-\frac{(N-2)q}{2}-s}}{|x|^s \big(1+|x|^2 \big)^{\frac{(N-2)q}{2}}} dx\\
&=C \varepsilon^{N-\frac{(N-2)q}{2}-s} \int_0^{\rho/\varepsilon}\frac{r^{N-1}}{r^s(1+r^2)^{\frac{(N-2)q}{2}} }dr\\
&\geq C \varepsilon^{N-\frac{(N-2)q}{2}-s} \int_1^{\rho/\varepsilon}\frac{r^{N-1}}{r^s(1+r^2)^{\frac{(N-2)q}{2}} }dr\\
& \geq C \varepsilon^{N-\frac{(N-2)q}{2}-s}  \int_1^{\rho/\varepsilon}  r^{N-s-(N-2)q-1}dr \\
&=\begin{cases}
&C\varepsilon^{N-\frac{(N-2)q}{2}-s},~~~~~~~~\text{if }~q> \frac{N-s}{N-2},\\
&C\varepsilon^{N-\frac{(N-2)q}{2}-s}|ln\varepsilon |,~~\text{if }~q= \frac{N-s}{N-2}, \\&C\varepsilon^{\frac{N-2}{2}q},~~~~~~~~~~~~~~~~\text{if }~q< \frac{N-s}{N-2}.
\end{cases}\\
\end{aligned}
\end{equation*}

On the orther hand,

\begin{equation*}
\begin{aligned}
\int_{\Omega} \frac{|u_\varepsilon|^{q}}{|x|^s}dx &\leq \int_{B_{2\rho}}\frac{|U_\varepsilon(x)|^{q}}{|x|^s}dx\\
&\leq C \varepsilon^{N-\frac{(N-2)q}{2}-s} \int_0^{\rho/\varepsilon}\frac{r^{N-s-1}}{(1+r^2)^{\frac{(N-2)q}{2}} }dr\\
&= C \varepsilon^{N-\frac{(N-2)q}{2}-s} \int_0^{1}\frac{r^{N-s-1}}{(1+r^2)^{\frac{(N-2)q}{2}} }dr +C \varepsilon^{N-\frac{(N-2)q}{2}-s} \int_1^{\rho/\varepsilon}\frac{r^{N-s-1}}{(1+r^2)^{\frac{(N-2)q}{2}} }dr \\
&\leq C \varepsilon^{N-\frac{(N-2)q}{2}-s}  +C \varepsilon^{N-\frac{(N-2)q}{2}-s} \int_1^{\rho/\varepsilon}  r^{N-s-(N-2)q-1}dr  \\
&\leq\begin{cases}
&C\varepsilon^{N-\frac{(N-2)q}{2}-s},~~~~~~~~\text{if }~q> \frac{N-s}{N-2},\\
&C\varepsilon^{N-\frac{(N-2)q}{2}-s}|ln\varepsilon |,~~\text{if }~q= \frac{N-s}{N-2}, \\&C\varepsilon^{\frac{N-2}{2}q},~~~~~~~~~~~~~~~~\text{if }~ q= \frac{N-s}{N-2}.
\end{cases}
\end{aligned}
\end{equation*}
For any $t>0$,  we have
\begin{equation*}
\begin{aligned}
\mathcal{I}(tu_\varepsilon)
&\leq\frac{t^2}{2}\Big(S^{\frac N2}+O(\varepsilon^{N-2})\Big) -\frac{\lambda t^{2\cdot{2_{\alpha}^*}}}{2\cdot{2_{\alpha}^*}}\Big( C(N,\alpha) S^{\frac{2N-\alpha}{2}} -O(\varepsilon^{\frac{2N-\alpha}{2}}) \Big)-  \frac{\mu t^q}{q} \int_{\Omega} \frac{|u_\varepsilon|^{q}}{|x|^s}dx  \\
&=:\bar{h}(t).
\end{aligned}
\end{equation*}
Since $2< q <2^*(s)$, then it is easy to see that $\bar{h}(0)=0$ and $\lim\limits_{t\to+\infty }\bar{h}(t)=-\infty$. Thus there exists $t_\varepsilon>0$ such that $\bar{h}(t_\varepsilon)=\max\limits_{t>0}\bar{h}(t)$ and
\begin{equation}\label{eq3.16}
\begin{aligned}
t_\varepsilon\Big(S^{\frac N2}+O(\varepsilon^{N-2})\Big)- \lambda t_\varepsilon^{2\cdot{2_{\alpha}^*}-1}\Big(  C(N,\alpha) S^{\frac{2N-\alpha}{2}} -O(\varepsilon^{\frac{2N-\alpha}{2}})\Big)  -\mu t_\varepsilon^{q-1} \int_{\Omega} \frac{|u_\varepsilon|^{q}}{|x|^s}dx  =0.
\end{aligned}
\end{equation}
For $\varepsilon>$ small enough, we also deduce from \eqref{eq3.16} that there exists a $t_2>0$ independent of $\varepsilon$ such that
\begin{equation*}
\begin{aligned}
t_\varepsilon&=\Bigg(\frac{S^{\frac N2}+O(\varepsilon^{N-2}) - \mu t_\varepsilon^{q-2} \int_{\Omega} \frac{|u_\varepsilon|^{q}}{|x|^s}dx}{ C(N,\alpha) S^{\frac{2N-\alpha}{2}} -O(\varepsilon^{\frac{2N-\alpha}{2}})} \Bigg)^{\frac{1}{2\cdot{2_{\alpha}^*}-2}}\\
&\leq\Bigg(\frac{S^{\frac N2}+O(\varepsilon^{N-2}) }{ C(N,\alpha) S^{\frac{2N-\alpha}{2}} -O(\varepsilon^{\frac{2N-\alpha}{2}})} \Bigg)^{\frac{1}{2\cdot{2_{\alpha}^*}-2}}\\
&\leq \Big(\frac{1}{C(N,\alpha) S^{\frac{N-\alpha}{2}} }+O(\varepsilon^{(N-2)})   \Big)^{\frac{1}{2\cdot{2_{\alpha}^*}-2}}<t_2.
\end{aligned}
\end{equation*}
On the other hand, it follows from \eqref{eq3.16} that there exists $t_3 > 0$ independent of $\varepsilon$ such that for $\varepsilon>0$,
\begin{equation*}
\begin{aligned}
t_\varepsilon&=\Bigg(\frac{S^{\frac N2}+O(\varepsilon^{N-2}) - \mu t_\varepsilon^{q-2} \int_{\Omega} \frac{|u_\varepsilon|^{q}}{|x|^s}dx}{ C(N,\alpha) S^{\frac{2N-\alpha}{2}} -O(\varepsilon^{\frac{2N-\alpha}{2}})} \Bigg)^{\frac{1}{2\cdot{2_{\alpha}^*}-2}}\\
&\geq\Bigg(\frac{S^{\frac N2} - \mu t_\varepsilon^{q-2} \int_{\Omega} \frac{|u_\varepsilon|^{q}}{|x|^s}dx }{ C(N,\alpha) S^{\frac{2N-\alpha}{2}} -O(\varepsilon^{\frac{2N-\alpha}{2}})} \Bigg)^{\frac{1}{2\cdot{2_{\alpha}^*}-2}}\\
&\geq \Big(\frac{1}{2C(N,\alpha) S^{\frac{N-\alpha}{2}} }   \Big)^{\frac{1}{2\cdot{2_{\alpha}^*}-2}}>t_3.
\end{aligned}
\end{equation*}
Thus we have a lower and a upper bound for $t_\varepsilon$, independent of $\varepsilon$. We can argue as in $(1)$ to conclude that
$$\begin{aligned}
0<\max_{t\geq0}\mathcal{I}(tu_\varepsilon)&\leq \Big(\frac{1}{2}-\frac{1}{2\cdot{2_{\alpha}^*}} \Big) \Bigg( \frac{\int_\Omega|\nabla u_\varepsilon|^2dx}
{\Big(\lambda\int_\Omega\int_\Omega\frac{|u_\varepsilon(x)|^{2_\alpha^*} |u_\varepsilon|^{2_\alpha^*}}{|x-y|^\alpha}dxdy \Big)^{\frac{1}{2_\alpha^*}}}\Bigg)^{\frac{2_\alpha^*}{2_\alpha^*-1}}-  \frac{\mu t_0^{q}}{q} \int_{\Omega} \frac{|u|^{q}}{|x|^s}dx\\
& \leq \frac{N-\alpha+2}{2(2N-\alpha)} \Bigg(\frac{ S^{\frac N2}+O(\varepsilon^{N-2}) }
{\Big(\lambda C(N,\alpha) S^{\frac{2N-\alpha}{2}} -O(\varepsilon^{\frac{2N-\alpha}{2}}) \Big)^\frac{N-2}{2N-\alpha}}  \Bigg)^{\frac{2_\alpha^*}{2_\alpha^*-1}} -\frac{\mu t_0^{q}}{q} \int_{\Omega} \frac{|u|^{q}}{|x|^s}dx\\
& \leq \frac{N-\alpha+2}{2(2N-\alpha)} \Bigg( \frac{S^{\frac N2}}{\big(\lambda C(N,\alpha) S^{\frac{2N-\alpha}{2}} \big)^{\frac{1}{2_\alpha^*}}} +O(\varepsilon^{N-2}) \Bigg)^{\frac{2_\alpha^*}{2_\alpha^*-1}} -\frac{\mu t_0^{q}}{q} \int_{\Omega} \frac{|u|^{q}}{|x|^s}dx\\
&\leq \frac{N-\alpha+2}{2(2N-\alpha)}  \Big(\frac{1}{\lambda C(N,\alpha) }  \Big)^{\frac{1}{2_\alpha^*-1}} S^{\frac{2_\alpha^*}{2_\alpha^*-1}}
 +O(\varepsilon^{N-2}) -\frac{\mu t_0^{q}}{q} \int_{\Omega} \frac{|u|^{q}}{|x|^s}dx.
\end{aligned}$$

Now we distinguish the following cases.
\begin{itemize}
\item[(\romannumeral1)] In the case $N=3$ and $s<1$, we know that \(2<\frac{N-s}{N-2}<\frac{2(2-s)}{N-2}=\frac{2(N-s)}{N-2}-2 <2^*(s)\). If $q> \frac{2(2-s)}{N-2}$,    by \eqref{eq2.3002} we get
    $$\max_{t\geq0}\mathcal{I}(tu_{\varepsilon})\leq \frac{N-\alpha+2}{2(2N-\alpha)}  \Big(\frac{1}{\lambda C(N,\alpha) }  \Big)^{\frac{1}{2_\alpha^*-1}} S^{\frac{2_\alpha^*}{2_\alpha^*-1}}  +O(\varepsilon^{N-2}) -O(\varepsilon^{N-\frac{(N-2)q}{2}-s}). $$
    Since $q> \frac{2(2-s)}{N-2}$, we have that $N-\frac{(N-2)q}{2}-s<N-2$, and we get the conclusion for $\varepsilon$ small enough.

    If $\frac{N-s}{N-2}< q\leq \frac{2(2-s)}{N-2}$,  by \eqref{eq2.3002}  we get
    $$ \max_{t\geq0}\mathcal{I}(tu_{\varepsilon})\leq \frac{N-\alpha+2}{2(2N-\alpha)}  \Big(\frac{1}{\lambda C(N,\alpha) }  \Big)^{\frac{1}{2_\alpha^*-1}} S^{\frac{2_\alpha^*}{2_\alpha^*-1}} +O(\varepsilon^{N-2}) -\mu O(\varepsilon^{N-\frac{(N-2)q}{2}-s}). $$
     Since $\frac{N-s}{N-2}< q\leq \frac{2(2-s)}{N-2}$ means that $N-\frac{(N-2)q}{2}-s\geq N-2$, we also get the conclusion with $\mu=\varepsilon^{-\theta}$ with $\theta>N-\frac{(N-2)q}{2}-s-N-2 $ for $\varepsilon$ small enough.

     If $q=\frac{N-s}{N-2}$,   by \eqref{eq2.3002} we get
    $$ \max_{t\geq0}\mathcal{I}(tu_{\varepsilon})\leq \frac{N-\alpha+2}{2(2N-\alpha)}  \Big(\frac{1}{\lambda C(N,\alpha) }  \Big)^{\frac{1}{2_\alpha^*-1}} S^{\frac{2_\alpha^*}{2_\alpha^*-1}}  +O(\varepsilon^{N-2}) - \mu O(\varepsilon^{\frac{N-s}{2}})|ln \varepsilon|. $$
     Since
     $$\begin{aligned}
     \lim_{\varepsilon\to 0}\frac{\varepsilon^{\frac{N-s}{2}} |ln \varepsilon|}{\varepsilon^{N-2}}=\lim_{\varepsilon\to 0}\frac{-1}{ \big(N-2 - \frac{N-s}{2}\big)\varepsilon^{N-2- \frac{N-s}{2}} }=0,
     \end{aligned}$$
     we again get the conclusion  with $\mu=\varepsilon^{-\theta}$ and $N-2- \frac{N-s}{2}+\theta\in (0,1)$ for $\varepsilon$ small enough.

    If $2<q<\frac{N-s}{N-2}$,   by \eqref{eq2.3002} we see that
    $$ \max_{t\geq0}\mathcal{I}(tu_{\varepsilon})\leq \frac{N-\alpha+2}{2(2N-\alpha)}  \Big(\frac{1}{\lambda C(N,\alpha) }  \Big)^{\frac{1}{2_\alpha^*-1}} S^{\frac{2_\alpha^*}{2_\alpha^*-1}} +O(\varepsilon^{N-2}) -\mu  O(\varepsilon^{\frac{N-2}{2}q}), $$
    and for $\mu=\varepsilon^{-\theta}$ with $\theta> \frac{(N-2)(q-2)}{2}$, we also get the conclusion.
\end{itemize}

\smallskip

\begin{itemize}
\item[(\romannumeral2)]  In the case $N=3$ and $s=1$, we know that \(\frac{N-s}{N-2}=2 \).     By \eqref{eq2.3002} and  $2<q< \frac{2(N-s)}{N-2}$, we get
    $$ \max_{t\geq0}\mathcal{I}(tu_{\varepsilon})\leq \frac{N-\alpha+2}{2(2N-\alpha)}  \Big(\frac{1}{\lambda C(N,\alpha) }  \Big)^{\frac{1}{2_\alpha^*-1}} S^{\frac{2_\alpha^*}{2_\alpha^*-1}} +O(\varepsilon^{N-2}) -O(\varepsilon^{N-\frac{(N-2)q}{2}-s}). $$
    In view of $N-\frac{(N-2)q}{2}-s<N-2$, we get the conclusion for $\varepsilon$ small enough.
\end{itemize}

\smallskip

\begin{itemize}
\item[(\romannumeral3)] In the case $N=3$ and $1<s<2$ or $N\geq4$ and $0<s<2$, we know that \(\frac{N-s}{N-2}<2 \).     By \eqref{eq2.3002} and  $2<q< \frac{2(2-s)}{N-2}$, we get
    $$ \max_{t\geq0}\mathcal{I}(tu_{\varepsilon})\leq \frac{N-\alpha+2}{2(2N-\alpha)}  \Big(\frac{1}{\lambda C(N,\alpha) }  \Big)^{\frac{1}{2_\alpha^*-1}} S^{\frac{2_\alpha^*}{2_\alpha^*-1}} +O(\varepsilon^{N-2}) -O(\varepsilon^{N-\frac{(N-2)q}{2}-s}). $$
    In view of $N-\frac{(N-2)q}{2}-s<N-2$, we get again the conclusion for $\varepsilon$ small enough.
\end{itemize}

\end{proof}

\medskip

We can now finally derive the main existence result.

\noindent\textbf{Proof of Theorem \ref{Thm1.1}.}
From Lemma \ref{Lem2.1} and the mountain pass theorem \cite{Willem1996},
there exists a $(PS)_c$ sequence $\{u_n\}$ such that
\begin{equation}
\mathcal{I}(u_n)\to c~~\text{ and}~~~\mathcal{I}'(u_n)\to 0~\text{in }~ (H_0^1(\Omega))^{-1}
\end{equation}
where
$$c=\inf_{\gamma\in \Gamma}\sup_{t\in[0,1]}\mathcal{I}(\gamma(t))$$
and $\Gamma$ is defined by
$$\Gamma:=\Big\{\gamma\in C([0,1], H_0^1(\Omega)): \gamma(0)=0~~\text{and}~~\mathcal{I}(\gamma(1))<0  \Big\}.$$
By Lemma \ref{Lem2.2}, if one of the following conditions is satisfied:
\begin{itemize}
\item[$(1)$] $\lambda, \mu>0$, $1<p<2_\alpha^*$ and  $2< q < 2^*(s)$,
\item[$(2)$] $\lambda>0$, $0<\mu<\bar{\mu}$,  $1<p<2_\alpha^*$ and $2= q =2^*(s)$( that is, $s=2$),
\end{itemize}
then the sequence $\{u_n\}$ contains a strongly convergent subsequence and $\mathcal{I}(u_0)=c>0$. This yields a nontrivial solution for \eqref{eq1.1}.

\medskip
On the other hand, by Lemma \ref{Lem2.3} we obtain that
\begin{itemize}
\item[$(3)$] if $\lambda, \mu>0$,  $2< q =2^*(s)$ and one of the following conditions is satisfied:
    \begin{itemize}
    \item[(\romannumeral1)] $2_\alpha^*-1<p<2_\alpha^*$ and $\lambda>0$,
    \item[(\romannumeral2)] $1<p\leq2_\alpha^*-1$ with $\alpha\leq4$ and $\lambda>0$ is sufficiently large,
    \end{itemize}
    then   \[c<\frac{2-s}{2(N-s)} \frac{1}{\mu^{\frac{2}{2^*(s)-2}}} \mu_{s}(\mathbb{R}^N)^{\frac{N-s}{2-s}}.\]
\item[$(4)$] if  $\lambda, \mu>0$, $p=2_\alpha^*$, $2< q <2^*(s)$  and one of the following conditions id satisfied:
    \begin{itemize}
     \item[(\romannumeral1)] $N=3$, $s<1$, $2^*(s)-2<q< 2^*(s)$ and $\mu>0$,
     \item[(\romannumeral2)] $N=3$, $s<1$, $2< q\leq 2^*(s)-2$ and $\mu>0$ is sufficiently large,
     \item[(\romannumeral3)] $N=3$, $1\leq s<2$, $2<q< 2^*(s)$ and $\mu>0$,
     \item[(\romannumeral4)] $N\geq4$, $0<s<2$, $2<q< 2^*(s)$ and $\mu>0$,
    \end{itemize}
  then
    \[c< \frac{N-\alpha+2}{2(2N-\alpha)}  \Big(\frac{1}{\lambda C(N,\alpha) }  \Big)^{\frac{1}{2_\alpha^*-1}} S^{\frac{2_\alpha^*}{2_\alpha^*-1}}.\]
\end{itemize}
Together with Lemma \ref{Lem2.2}, we again deduce that the sequence $\{u_n\}$ has a strongly convergent subsequence and equation \eqref{eq1.1} has a nontrivial solution.
\raisebox{-0.5mm}{\rule{2.5mm}{3mm}}\vspace{6pt}

\section*{Acknowledgments}
This work was supported by Xingdian talent support program of Yunnan Province, National Natural Science Foundation of China (12261107, 12101546), Yunnan Fundamental Research Projects (202301AU070144, 202301AU070159), Scientific Research Fund of Yunnan Educational Commission (2023J0199, 2023Y0515). A.J. is partially supported by PRIN 2022 "\emph{Pattern formation in nonlinear phenomena}" and is a member of the INDAM Research Group GNAMPA.

\bibliographystyle{plain}
\bibliography{gu}

\begin{thebibliography}{10}

\bibitem{Bahrami2014}
M.~Bahrami, A.~Gro\ss~ardt, S.~Donadi, and A.~Bassi.
\newblock The {S}chr\"{o}dinger-{N}ewton equation and its foundations.
\newblock {\em New J. Phys.}, 16(November):115007, 17, 2014.

\bibitem{Brezis-Nirenberg1983CPAM}
H.~Br\'{e}zis and L.~Nirenberg.
\newblock Positive solutions of nonlinear elliptic equations involving critical
  {S}obolev exponents.
\newblock {\em Comm. Pure Appl. Math.}, 36(4):437--477, 1983.

\bibitem{Caffarelli-Kohn-Nirenberg1984CM}
L.~Caffarelli, R.~Kohn, and L.~Nirenberg.
\newblock First order interpolation inequalities with weights.
\newblock {\em Compositio Math.}, 53(3):259--275, 1984.

\bibitem{Cerami-Zhong-Zou2015CVPDE}
G.~Cerami, X.~Zhong, and W.~Zou.
\newblock On some nonlinear elliptic {PDE}s with {S}obolev-{H}ardy critical
  exponents and a {L}i-{L}in open problem.
\newblock {\em Calc. Var. Partial Differential Equations}, 54(2):1793--1829,
  2015.

\bibitem{Chen-Radulescu-Shu-Wei2025MA}
S.~Chen, V.~R\u{a}dulescu, M.~Shu, and J.~Wei.
\newblock Static solutions for {C}hoquard equations with {C}oulomb potential
  and upper critical growth.
\newblock {\em Math. Ann.}, 392(2):2081--2130, 2025.

\bibitem{Chen-Wang2024CVPDE}
We. Chen and Z.~Wang.
\newblock Blowing-up solutions for a slightly subcritical {C}hoquard equation.
\newblock {\em Calc. Var. Partial Differential Equations}, 63(9):Paper No. 235,
  36, 2024.

\bibitem{Choquard-Stubbe-Vuffray2008DIE}
P.~Choquard, J.~Stubbe, and M.~Vuffray.
\newblock Stationary solutions of the {S}chr\"{o}dinger-{N}ewton model---an
  {ODE} approach.
\newblock {\em Differential Integral Equations}, 21(7-8):665--679, 2008.

\bibitem{Cingolani-Clapp-Secchi2012ZAMP}
S.~Cingolani, M.~Clapp, and S.~Secchi.
\newblock Multiple solutions to a magnetic nonlinear {C}hoquard equation.
\newblock {\em Z. Angew. Math. Phys.}, 63(2):233--248, 2012.

\bibitem{Gao-Yang2017JMAA}
F.~Gao and M.~Yang.
\newblock On nonlocal {C}hoquard equations with {H}ardy-{L}ittlewood-{S}obolev
  critical exponents.
\newblock {\em J. Math. Anal. Appl.}, 448(2):1006--1041, 2017.

\bibitem{Gao-Yang2018SCM}
F.~Gao and M.~Yang.
\newblock The {B}rezis-{N}irenberg type critical problem for the nonlinear
  {C}hoquard equation.
\newblock {\em Sci. China Math.}, 61(7):1219--1242, 2018.

\bibitem{Ghoussoub-Kang2004AIHP}
N.~Ghoussoub and X.~Kang.
\newblock Hardy-{S}obolev critical elliptic equations with boundary
  singularities.
\newblock {\em Ann. Inst. H. Poincar\'{e} C Anal. Non Lin\'{e}aire},
  21(6):767--793, 2004.

\bibitem{Ghoussoub-Robert2006GFA}
N.~Ghoussoub and F.~Robert.
\newblock The effect of curvature on the best constant in the {H}ardy-{S}obolev
  inequalities.
\newblock {\em Geom. Funct. Anal.}, 16(6):1201--1245, 2006.

\bibitem{Ghoussoub-Yuan2000TAMS}
N.~Ghoussoub and C.~Yuan.
\newblock Multiple solutions for quasi-linear {PDE}s involving the critical
  {S}obolev and {H}ardy exponents.
\newblock {\em Trans. Amer. Math. Soc.}, 352(12):5703--5743, 2000.

\bibitem{Guo-Hu-Peng-Shuai2019CVPDE}
L.~Guo, T.~Hu, S.~Peng, and W.~Shuai.
\newblock Existence and uniqueness of solutions for {C}hoquard equation
  involving {H}ardy-{L}ittlewood-{S}obolev critical exponent.
\newblock {\em Calc. Var. Partial Differential Equations}, 58(4):Paper No. 128,
  34, 2019.

\bibitem{Guo-Tang2025arXiv}
T.~Guo and Tang X.
\newblock Existence and qualitative properties of solutions for a choquard-type
  equation with hardy potential.
\newblock {\em arXiv: 2312.11855}.

\bibitem{He2022JMAA}
R.~He.
\newblock Infinitely many solutions for the {B}r\'{e}zis-{N}irenberg problem
  with nonlinear {C}hoquard equations.
\newblock {\em J. Math. Anal. Appl.}, 515(2):Paper No. 126426, 24, 2022.

\bibitem{Hsia-Lin-Wadade2010JFA}
C.~Hsia, C.~Lin, and H.~Wadade.
\newblock Revisiting an idea of {B}r\'{e}zis and {N}irenberg.
\newblock {\em J. Funct. Anal.}, 259(7):1816--1849, 2010.

\bibitem{Li-Lin2012ARMA}
Y.~Li and C.~Lin.
\newblock A nonlinear elliptic {PDE} and two {S}obolev-{H}ardy critical
  exponents.
\newblock {\em Arch. Ration. Mech. Anal.}, 203(3):943--968, 2012.

\bibitem{Lieb1967SAM}
E.~Lieb.
\newblock Existence and uniqueness of the minimizing solution of {C}hoquard's
  nonlinear equation.
\newblock {\em Studies in Appl. Math.}, 57(2):93--105, 1976/77.

\bibitem{Lieb1983AM}
E.~Lieb.
\newblock Sharp constants in the {H}ardy-{L}ittlewood-{S}obolev and related
  inequalities.
\newblock {\em Ann. of Math. (2)}, 118(2):349--374, 1983.

\bibitem{Lieb-Loss2001book}
E.~Lieb and M.~Loss.
\newblock {\em Analysis}, volume~14 of {\em Graduate Studies in Mathematics}.
\newblock American Mathematical Society, Providence, RI, second edition, 2001.

\bibitem{Lions1980NA}
P.~Lions.
\newblock The {C}hoquard equation and related questions.
\newblock {\em Nonlinear Anal.}, 4(6):1063--1072, 1980.

\bibitem{Liu-Yang2025JMAA}
C.~Liu and X.~Yang.
\newblock Sign-changing solutions for critical {C}hoquard equation on bounded
  domain.
\newblock {\em J. Math. Anal. Appl.}, 541(2):Paper No. 128726, 22, 2025.

\bibitem{Ma-Zhao2010ARMA}
L.~Ma and L.~Zhao.
\newblock Classification of positive solitary solutions of the nonlinear
  {C}hoquard equation.
\newblock {\em Arch. Ration. Mech. Anal.}, 195(2):455--467, 2010.

\bibitem{Moroz-Van-Schaftingen2013JFA}
V.~Moroz and J.~Van~Schaftingen.
\newblock Groundstates of nonlinear {C}hoquard equations: existence,
  qualitative properties and decay asymptotics.
\newblock {\em J. Funct. Anal.}, 265(2):153--184, 2013.

\bibitem{Moroz-VanSchaftingen2013JFA}
V.~Moroz and J.~Van~Schaftingen.
\newblock Groundstates of nonlinear {C}hoquard equations: existence,
  qualitative properties and decay asymptotics.
\newblock {\em J. Funct. Anal.}, 265(2):153--184, 2013.

\bibitem{Pan-Wen-Yang2025JDE}
K.~Pan, S.~Wen, and J.~Yang.
\newblock Qualitative analysis to an eigenvalue problem of the {H}artree type
  {B}r\'{e}zis-{N}irenberg problem.
\newblock {\em J. Differential Equations}, 440(part 1):113417, 2025.

\bibitem{Pekar1954}
S.~Pekar.
\newblock Untersuchungen \"{u}ber die elektronentheorie der kristalle.
\newblock {\em Akademie Verlag}, 1954.

\bibitem{Squassina-Yang-Zhao2023}
M.~Squassina, M.~Yang, and S.~Zhao.
\newblock Local uniqueness of blow-up solutions for critical {H}artree
  equations in bounded domain.
\newblock {\em Calc. Var. Partial Differential Equations}, 62(8):Paper No. 217,
  51, 2023.

\bibitem{Talenti1976AMPA}
G.~Talenti.
\newblock Best constant in {S}obolev inequality.
\newblock {\em Ann. Mat. Pura Appl. (4)}, 110:353--372, 1976.

\bibitem{Willem1996}
M.~Willem.
\newblock {\em Minimax theorems}, volume~24 of {\em Progress in Nonlinear
  Differential Equations and their Applications}.
\newblock Birkh\"auser Boston, Inc., Boston, MA, 1996.

\bibitem{Xia-Zhang2024SIAM}
J.~Xia and X.~Zhang.
\newblock Multibump solutions for critical {C}hoquard equation.
\newblock {\em SIAM J. Math. Anal.}, 56(3):3832--3860, 2024.

\bibitem{Yang-Ye-Zhao2023JDE}
M.~Yang, W.~Ye, and S.~Zhao.
\newblock Existence of concentrating solutions of the {H}artree type
  {B}r\'{e}ezis-{N}irenberg problem.
\newblock {\em J. Differential Equations}, 344:260--324, 2023.

\bibitem{Yang-Zhao2023JGA}
M.~Yang and S.~Zhao.
\newblock Blow-up behavior of solutions to critical {H}artree equations on
  bounded domain.
\newblock {\em J. Geom. Anal.}, 33(6):Paper No. 191, 63, 2023.

\bibitem{Zhong-Zou2016CCM}
X.~Zhong and W.~Zou.
\newblock A perturbed nonlinear elliptic {PDE} with two {H}ardy-{S}obolev
  critical exponents.
\newblock {\em Commun. Contemp. Math.}, 18(4):1550061, 26, 2016.

\end{thebibliography}

\end{document}